\newtheorem{thm}{Theorem}[section]
\newtheorem{cor}[thm]{Corollary}
\newtheorem{lem}[thm]{Lemma}
\newtheorem{claim}{Claim}
\newtheorem{prop}[thm]{Proposition}
\newtheorem{proposition}[thm]{Proposition}
\theoremstyle{definition}
\newtheorem{definition}[thm]{Definition}
\theoremstyle{remark}
\newtheorem{rem}[thm]{Remark}
\numberwithin{claim}{thm}
\numberwithin{equation}{section}
\newenvironment{proofofclaim}{\textit{Proof of Claim \theclaim}.}{\par Hence, the proof of the Claim \theclaim\ is completed.\hfill\ensuremath{\blacksquare}}
\begin{document}

\title[Surfaces with a canonical principal direction in $\mathbb E^3_1$]
{Complete classification of  surfaces with a canonical principal direction in the Minkowski 3-space}

\author{Alev Kelleci}
\address{F\i rat University, Faculty of Science, Department of Mathematics, 23200 Elaz\i\u g, Turkey.}
\email{alevkelleci@hotmail.com}

\author{Mahmut Erg\" ut}
\address{Nam\i k Kemal University, Faculty of Science and Letters, Department of Mathematics, 59030 Tekirda\u g, Turkey.}
\email{mergut@nku.edu.tr}

\author{Nurettin Cenk Turgay}
\address{Istanbul Technical University, Faculty of Science and Letters, Department of  Mathematics, 34469 Maslak, Istanbul, Turkey.}
\email{turgayn@itu.edu.tr}

\subjclass[2010]{Primary 53B25, Secondary 53A35, 53C50}
\keywords{Minkowski space, Lorentzian surfaces, canonical principal direction}

\begin{abstract}
In this paper, we characterize and classify all surfaces endowed with canonical principal direction relative to a space-like and light-like, constant direction in Minkowski 3-spaces.  
\end{abstract}

\maketitle

\section{Introduction}\label{sec:1}
It is well known that, a helix is a curve whose tangent lines make a constant angle with a fixed vector.   After the question `Are there  any surface making a constant angle with some fixed vector direction?' was introduced in \cite{DFJvK2007}, the concept of constant angle surfaces,  called  also as helix surfaces, have been studied geometers. Firstly, the applications of concerning surfaces in the theory of liquid crystals and of layered fluids were considered in \cite{CS2007}. They used for their study of surfaces the Hamilton-Jacobi equation, correlating the surface and  and the direction field. Further, Munteanu and Nistor gave another approach to classify all surfaces for which the unit normal makes a constant angle with a fixed direction in \cite{MunteanuNistor2009}. Moreover, the study of constant angle surfaces was extended in different ambient spaces, e.g. in $\mathbb S^{2}\times \mathbb R$ \cite{DFJvK2007} and  $\mathbb H^{2}\times \mathbb R$ \cite{DillenMunt2009}, in  $\mathbb E^{3}_1 $ \cite{LopezMunt2011,GSK2011,FuNistor2013}. In higher dimensional Euclidean space, hypersurfaces whose tangent space makes constant angle with a fixed direction are studied and a local description of how these hypersurfaces are constructed is given. They are called helix hypersurfaces, \cite{SH2009}. 

One of common geometrical properties of this type of surfaces is the following. If we denote by $U^T$ the projection of the fixed direction $k$ on the tangent plane of the surface, then $U^T$ is a principal direction of the surface with the corresponding principal curvature 0. Because of this reason, a recent natural problem that appears in the context of constant angle surfaces is to study those surfaces for which $U^T$ remains a principal direction but the corresponding principal curvature is different from zero. 

Let $N$ be a  (semi-)Riemannian manifold,  $M$ a hypersurface of $N$ and $X$ a vector field tangent to $N$.  $M$ is said to have a canonical principal direction (CPD) relative to $X$  if the tangential projection of $X$ to  $M$ gives a principal direction, \cite{GarnicaPalnas}.  One of the most common examples of hypersurfaces with CPD is rotational hypersurfaces in Euclidean spaces which have canonical principal direction relative to a vector field parallel to its rotation axis. We also want to note that  a hypersurface in an Euclidean space  with CPD relative to its position vector is said to be a generalized constant ratio hypersurface, \cite{EKTGCR,FM2014}.

The problems of  classifying hypersurfaces with CPD relative to a fixed direction $k$ have been studied by some authors recently. For example, in \cite{DFJvK2009}, this problem was studied in $\mathbb S^{2} \times\mathbb {R}$ by Dillen et. al.  Further, surfaces with CPD in $\mathbb H^{2} \times\mathbb {R}$ was studied in  \cite{DMuntNistr2011}. On these two papers $k$ was chosen to be a unit vector tangent to the second factor. On the other hand, classification results on surfaces in semi-Euclidean spaces  with CPD to  a chosen relative direction was studied in \cite{FuNistor2013,MN2011,Nistor2013}. Before we proceed, we also would like to note that when the codimension of the submanifold is more than one, a generalization of this notion was given by Tojeiro in \cite{To} and a further study appear in \cite{MT}. 

In the present paper, we would like to move the study of CPD hypersurfaces in Euclidean spaces initiated in \cite{MN2011} into semi-Euclidean spaces by obtaining partial classification of CPD surfaces in Minkowski 3-space studied in \cite{Nistor2013,FuNistor2013}. This paper is organized as follows. In Sect. 2, we introduce the notation that we will use and give a brief summary of basic definitions in theory of submanifolds of semi-Euclidean spaces. In Sect. 3, we obtain some new characterizations and the complete classification of space-like and Lorentzian CPD surfaces relative to a space-like and light-like, constant direction in the Minkowski 3-space.

\section{CPD Hypersurfaces in Minkowski spaces}\label{SectSurvey}
In this section after we give some basic equations and facts on hypersurfaces in Minkowski spaces, we would like to consider geometrical properties of hypersurfaces in a Minkowski space $\mathbb{E}^{3}_1$ endowed with a  canonical principal direction.

\subsection{Basic facts and definitions}\label{sec:2}
First, we would like to give a brief summary of basic definitions, facts and equations in the theory of  submanifolds of pseudo-Euclidean space (see for detail, \cite{ONeillKitap,ChenPRGeom2001}).

Let $\mathbb E^m_1$ denote the Minkowski $m$-space with the canonical
Lorentzian metric tensor  given by
$$
\tilde g=\sum\limits_{i=1}^{m-1} dx_i^2- dx_m^2,
$$
where $x_1, x_2, \hdots, x_m$  are rectangular coordinates of the
points of $\mathbb E^m_1$. We denote the Levi-Civita connection of $\mathbb{E}_{n+1}^{1}$ by and $\widetilde{\nabla }$.

The causality of a vector in a Minkowski space is defined as following.
A non-zero vector $v$ in $\mathbb{E}^m_1$ is said to be space-like, time-like
and light-like (null) regarding to $\left\langle v,v\right\rangle >0$ , $%
\left\langle v,v\right\rangle <0$ and $\left\langle v,v\right\rangle =0$,
respectively. Note that $v$ is said to be causal if it is not space-like.

Let $M$ be an oriented hypersurface in $\mathbb{E}_{n+1}^{1}$, $N$ and $\nabla$  its unit
normal vector associated with its orientation and  Levi-Civita connection, respectively. Then, Gauss and Weingarten
formulas are given by 
\begin{align}\nonumber
\begin{split}
\widetilde{\nabla }_{X}Y =&\nabla _{X}Y+h\left( X,Y\right) ,\\
\widetilde{\nabla }_{X}N =&-S(X),
\end{split}
\end{align}
respectively whenever $X,Y$ are tangent to $M$, where $h$ and $S$ are the second
fundamental form and the shape operator (or Weingarten map) of $M$. Note that $M$ is said to be space-like (resp.
time-like) if the induced metric $g=\left.\widetilde g\right|_{M}$ of $M$ is
Riemannian (resp. Lorentzian). This is equivalent to being time-like (resp.
space-like) of $N$ at each point of $M$.

The Codazzi equations is given by 
\begin{equation}
\label{MlinkCodazzi}(\widetilde{\nabla }_{X}h)(Y,Z) =(\widetilde{\nabla }_{Y}h)(X,Z),
\end{equation}%
where $R$ is the curvature tensor associated with the connection $\nabla $
and $\widetilde{\nabla }h$ is defined by 
$$
(\widetilde{\nabla }_{X}h)(Y,Z)=\nabla _{X}^{\perp }h(Y,Z)-h(\nabla
_{X}Y,Z)-h(Y,\nabla _{X}Z).
$$

If $M$ is space-like, then its shape operator $S$
is diagonalizable, i.e., there exists a local orthonormal frame field $%
\{e_{1},e_{2}\}$ of the tangent bundle of $M$ such that $Se_{i}=k_{i}e_{i},\quad i=1,2,%
\hdots,n$. In this case, the vector field $e_{i}$ and smooth function $k_{i}$
are called a principal direction and a principal curvature of $M$.

On the other hand, if $M$ is time-like, then by choosing an appropriated frame field  of the tangent bundle of $M$, $S$ can be assumed to have one of the following three matrix representations  
\begin{align} \label{SOPCASES}
\begin{split}
\mbox{Case I. }S=\left( \begin{array}{ccc}
k_1 &\\
&\ddots&\\
&&k_n
\end{array}\right),&\quad 
\mbox{Case II. }S=\left( \begin{array}{ccccc}
k_1 &1\\
0&k_1&&&\\
&&k_3&&\\
&&&\ddots&\\
&&&&k_n
\end{array}\right),\\
\mbox{Case III. }S=\left( \begin{array}{ccccc}
k_1 &\nu&&&\\
-\nu&k_1&&&\\
&&k_3&&\\
&&&\ddots&\\
&&&&k_n
\end{array}\right), &
\mbox{ Case IV. }S=\left( \begin{array}{cccccc}
k_1 &1&1&&&\\
1&k_1&1&&&\\
1&1&k_1&&&\\
&&&k_4&&\\
&&&&\ddots&\\
&&&&&k_n
\end{array}\right)
\end{split}
\end{align}
for some {smooth} functions $k_1,k_2,\hdots,k_n, \nu$ (see for example \cite{Magid1985}). We would like to note that in Case I and Case III of \eqref{SOPCASES}, the frame field $\{e_1,e_2\}$ is orthonormal, i.e. 
$$\langle e_1,e_1 \rangle=-1, \langle e_2,e_2 \rangle=1,\quad \langle e_i,e_j \rangle=0\mbox{ whenever $i\neq j$}$$
and it is pseudo-orthonormal in Case II and Case IV with 
$$\langle e_A,e_B \rangle=\delta_{AB}-1,\quad \langle e_1,e_A \rangle=\langle e_2,e_A \rangle=0,\quad .\langle e_i,e_j \rangle=\delta_{ij}\mbox{whenenver $A,B=1,2$, $i,j>2$}.$$

Now, let $M$ be a surface in the Minkowski 3-space. Then, its mean curvature and Gaussian curvature are defined by $H=\mathrm{trace}\, S$ and $H=\mathrm{det}\, S$, respectively. $M$ is said to be flat if $K$ vanishes identically. On the other hand, if $H=0$ and $M$ is space-like, then it is called maximal  while a time-like surface with identically vanishing mean curvature is said to be a minimal surface.

Before we proceed to next subsection, we would like to notice the notion of 
angle  in the Minkowski 3-space (see for example \cite{EKTGCR}):

\begin{definition}\label{DefAngles1}
\label{LBLDEF23} Let $v$ and $w$ be future pointing (past pointing) time-like vectors in 
$\mathbb{E}_{1}^{3}$. Then, there is a unique non-negative real number $%
\theta $ such that%
\begin{equation*}
\left\vert \left\langle v,w\right\rangle \right\vert =\left\Vert
v\right\Vert \left\Vert w\right\Vert \cosh \theta .
\end{equation*}%
The real number $\theta$ is called the Lorentzian time-like angle between $%
v $ and $w$.
\end{definition}

\begin{definition}\label{DefAngles2}
\label{LBLDEF21} Let $v$ and $w$ be a space-like vectors in $\mathbb{E}_{1}^{3}$ that
 span a space-like vector subspace. Then, we have $\left|\left\langle  v,w\right\rangle\right|%
 \leq \left\|v\right\|\left\|w\right\|$ and hence, there is a unique real number $\theta\in[0,\pi/2]$ such that%
\begin{equation*}
\left\vert \left\langle v,w\right\rangle \right\vert =\left\Vert
v\right\Vert \left\Vert w\right\Vert \cos \theta .
\end{equation*}%
The real number $\theta$ is called the Lorentzian space-like angle between $v$
and $w$.
\end{definition}

\begin{definition}\label{DefAngles3}
\label{LBLDEF22} Let $v$ and $w$ be a space-like vectors in $\mathbb{E}_{1}^{3}$ that
 span a time-like vector subspace. Then, we have $\left|\left\langle  v,w\right\rangle\right|
 > \left\|v\right\|\left\|w\right\|$ and hence, there is a unique positive real number $\theta $ such that%
\begin{equation*}
\left\vert \left\langle v,w\right\rangle \right\vert =\left\Vert
v\right\Vert \left\Vert w\right\Vert \cosh \theta .
\end{equation*}%
The real number $\theta$ is called the Lorentzian time-like angle between $v$
and $w$.
\end{definition}

\begin{definition}\label{DefAngles4}
\label{LBLDEF24} Let $v$ be a space-like vectors and $w$ a future pointing time-like
 vector in $\mathbb{E}_{1}^{3}$. Then, there is a unique non-negative real number $\theta$ such that%
\begin{equation*}
\left\vert \left\langle v,w\right\rangle \right\vert =\left\Vert
v\right\Vert \left\Vert w\right\Vert \sinh \theta .
\end{equation*}%
The real number $\theta$ is called the Lorentzian time-like angle between $v$
and $w$.
\end{definition}

\subsection{A characterization of CPD hypersurfaces}
 First, we would like to recall the following definition (See for example\cite{FuNistor2013,Nistor2013,GarnicaPalnas}).

\begin{definition}
Let $M$ be a non-degenerated hypersurface in $\mathbb{E}^{n+1}_1$ and $\zeta$ a vector field in $\mathbb{E}^{n+1}_1$. $M$ is said to be endowed with CPD relative to $\zeta$ if its tangential component is a principle direction, i.e., $S(\zeta^T)=k_1\zeta^T$ for a smooth function $k_1$, where $\zeta^T$ denotes the tangential component of $\zeta$. In particular if $X=k$ for a fixed direction $k$ in $\mathbb{E}^{n+1}_t$, we will say that $M$ is a CPD-hypersurface.
\end{definition}

On the other hand, a surface $M$ in $\mathbb E^3$ is said to be a constant angle surface (CAS) if its  unit normal makes a constant angle with a fixed vector, \cite{MunteanuNistor2009} (see also \cite{DFJvK2007,DillenMunt2009,FuNistor2013}. Later, in \cite{GSK2011,LopezMunt2011} this definition is extended to surfaces in Minkowski spaces with obvious restrictions on the causality of the fixed vector and the normal vector because of the definition of `angle' in the Minkowski space (See Definition \ref{DefAngles1}- Definition \ref{DefAngles4}).

\begin{rem} \label{RemarkForCas} 
In fact, if the ambient space is pseudo-Euclidean, then a CAS surface is a CPD surface with corresponding principle curvature $k_1=0$ (see \cite{GSK2011,LopezMunt2011,MunteanuNistor2009}). Thus, we will exclude this case. Therefore, after this point, we will locally assume that the principle curvature $k_1$ corresponding to the principle direction of tangential part of $k$ is a non-vanishing function. 
\end{rem}

Let $M$ be a hypersurface in a Minkowski space $\mathbb E^{n+1}_1$ and $k$ be a fixed direction in  $\mathbb E^{n+1}_1$ . The fixed vector $k$ can be expressed as 
\begin{equation} \label{kdecom}
k=U+\left\langle N,N\right\rangle\left\langle k,N\right\rangle N
\end{equation}
for a tangent vector $U$. We would like to give the following new characterization of CPD surfaces different from given in \cite[Theorem 2.1]{Nistor2013} and \cite[Theorem 3.7 and Theorem 4.5]{FuNistor2013}.

\begin{proposition}
\label{PROPOPP2Ext} Let $M$ be an oriented hypersurface in the Minkowski
space $\mathbb{E}^{n+1}_1$ and $k$ be a fixed vector on the tangent plane to the surface.
Consider a unit tangent vector field $e_1$ along $U$. Then, $M$ is
a CPD hypersurface if and only if a curve $\alpha$ is a geodesic of $M$
whenever it is an integral curve of $e_1$.
\end{proposition}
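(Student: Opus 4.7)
The plan is to exploit the fact that $k$ is a parallel vector field in the ambient space, so $\widetilde{\nabla}_X k = 0$ for every tangent vector field $X$ on $M$. Applying $\widetilde{\nabla}_X$ to the decomposition \eqref{kdecom}, using the Gauss and Weingarten formulas together with the identity $h(X,Y) = \varepsilon \langle S(X), Y \rangle N$ (where $\varepsilon = \langle N, N \rangle$, obtained by differentiating $\langle Y, N \rangle = 0$), and separating the tangential and normal parts, I obtain the key identity
\begin{equation*}
\nabla_X U = \varepsilon \langle k, N \rangle\, S(X).
\end{equation*}

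Next, I write $U = \lambda e_1$ with $\lambda$ a smooth function (so $e_1$ is unit in the direction of $U$), specialize to $X = e_1$, and observe that $\langle e_1, e_1 \rangle = \pm 1$ forces $\nabla_{e_1} e_1 \perp e_1$. The identity becomes
\begin{equation*}
e_1(\lambda)\, e_1 + \lambda\, \nabla_{e_1} e_1 = \varepsilon \langle k, N \rangle\, S(e_1),
\end{equation*}
where the two summands on the left are respectively along and perpendicular to $e_1$. For the ``only if'' direction, the CPD condition together with $e_1 \parallel U$ gives $S(e_1) = k_1 e_1$, making the right-hand side parallel to $e_1$; comparing the components perpendicular to $e_1$ then yields $\lambda \nabla_{e_1} e_1 = 0$, and since $\lambda \neq 0$ wherever $U \neq 0$, I conclude $\nabla_{e_1} e_1 = 0$, i.e., every integral curve of $e_1$ is a geodesic of $M$.

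For the converse, assume $\nabla_{e_1} e_1 = 0$. The displayed identity reduces to $\varepsilon \langle k, N \rangle\, S(e_1) = e_1(\lambda)\, e_1$, forcing $S(e_1)$ to be proportional to $e_1$; hence $e_1$, and therefore $U$, is a principal direction, and $M$ is CPD. The main technical subtlety lies in this direction: to conclude $S(e_1) \parallel e_1$ one needs $\langle k, N \rangle$ to be non-vanishing on the locus of interest. This is not an extra hypothesis but is guaranteed in the setting of the paper by Remark~\ref{RemarkForCas}, for if $\langle k, N \rangle \equiv 0$ on an open set, then the normal component of $\widetilde{\nabla}_X k = 0$ gives $\langle S(X), U \rangle = 0$ for every tangent $X$, hence $S(U) = 0$, placing us precisely in the excluded constant-angle case with $k_1 = 0$.
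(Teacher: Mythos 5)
Your computation is, at its core, the same one the paper performs: differentiate the decomposition \eqref{kdecom} of the parallel vector $k$ along $e_1$, use the Gauss and Weingarten formulas, and read off the tangential part. The difference is organizational and one of rigor. The paper splits into three cases according to the causality of $e_1$ (time-like, space-like, light-like) and writes the decomposition of $k$ with explicit signs in each case, whereas you work uniformly with \eqref{kdecom} and the sign $\varepsilon=\langle N,N\rangle$; your version is cleaner and, in the converse direction, more careful than the paper's, which simply asserts the equivalence $Se_1=k_1e_1\Leftrightarrow\nabla_{e_1}e_1=0$ without noting that the implication $\nabla_{e_1}e_1=0\Rightarrow S(e_1)\parallel e_1$ requires the coefficient $\langle k,N\rangle$ not to vanish. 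Your observation that $\langle k,N\rangle\equiv 0$ forces $S(U)=0$ and lands in the excluded constant-angle situation of Remark~\ref{RemarkForCas} is a genuine improvement.

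The one piece of the paper's proof your argument does not reproduce is its Case III, where $e_1$ (equivalently $U$) is light-like, which can occur for a Lorentzian hypersurface. There your two structural tools both fail: a null $U$ admits no unit vector field along it, so the normalization $U=\lambda e_1$ with $\langle e_1,e_1\rangle=\pm1$ is unavailable, and the splitting of a tangent vector into components ``along $e_1$'' and ``perpendicular to $e_1$'' degenerates because $e_1\in e_1^{\perp}$, so you cannot conclude $\lambda\nabla_{e_1}e_1=0$ by comparing perpendicular components. Strictly speaking the statement of Proposition~\ref{PROPOPP2Ext} presupposes a \emph{unit} $e_1$ and so arguably excludes this case by fiat, but since the paper explicitly treats it (via the decomposition $k=\phi(e_1-N)$), you should either add that case with a separate argument adapted to a null frame, or state explicitly that you restrict to non-degenerate $U$.
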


\begin{proof}
We will consider three cases seperately subject to causality of $U$.

\textbf{Case I.} Let $e_1$ is time-like. Thus, we have 
\begin{equation*}
k=-\langle k,e_{1}\rangle e_{1}+\langle k,N\rangle N.
\end{equation*}%
Since $\widetilde{\nabla }_{e_{1}}k=0$, this equation yields 
\begin{equation*}
0=-\langle k,\widetilde{\nabla }_{e_{1}}e_1\rangle e_1-\left\langle k,e_1\right\rangle%
 \widetilde{\nabla }_{e_{1}}e_1 -\left\langle k,Se_{1}\right\rangle N%
-\left\langle k,N\right\rangle Se_{1}.
\end{equation*}
The tangential part of this equation yields $Se_{1}=k_{1}e_{1}$ if and only
if $\nabla _{e_{1}}e_{1}=0$ which is equivalent to being geodesic of all
integral curves of $e_{1}$.

\textbf{Case II.} Let $e_1$ is space-like. Thus, we have 
\begin{equation}  \label{PROPCASEIIDECOMOP}
k=\langle k,e_1\rangle e_1+\varepsilon\langle k,N\rangle N,
\end{equation}
where $\varepsilon$ is either 1 or -1 regarding to being time-like or
space-like of $M$, respectively.

Similar to Case I, we obtain $Se_{1}=k_{1}e_{1}$ if and only if 
$\nabla_{e_{1}}e_{1}=0$.

\textbf{Case III.} Let $e_1$ is light-like. In this case, $k$ can be decompose as
\begin{equation}  \label{PROPCASEIIIDECOMOP}
k=\phi (e_1-N),
\end{equation}
for a non-constant function  $\phi$. 

Similar to the other case, we obtain $Se_{1}=k_{1}e_{1}$ if and only if 
$\nabla_{e_{1}}e_{1}=0$.
\end{proof}


\section{Classifications of CPD Surfaces in $\mathbb E^3_1$} \label{S:Classification}
In this section,  we want to complete classification of CPD surfaces in $\mathbb E^3_1$. We would like to note that the complete classification of surfaces endowed with canonical principal direction relative to a time-like constant direction $k=(0,0,1)$ was obtained in \cite{FuNistor2013,Nistor2013}.

\subsection{CPD surfaces relative to a space-like, constant direction.}
In this subsection, we consider  surfaces endowed  with CPD relative to a space-like, constant direction $k$. In this case, up to a linear isometry 
of $\mathbb E^3_1$, we may assume  that $k=(1,0,0)$.

First, we will assume that   $M$ is a space-like surface endowed  with CPD relative to $k=(1,0,0)$. In this case, $N$ is time-like and  \eqref{kdecom} becomes
\begin{equation}\label{ExpofS100}
k=\cosh \theta e_1+\sinh \theta N
\end{equation}
for a smooth function $\theta$. Let $e_2$ be a unit tangent vector field satisfying $\langle e_1,e_2\rangle=0$. By a simple computation considering \eqref{ExpofS100} we obtain the following lemma.

\begin{lem}\label{Case1ClassThmDiagonSpacelikekClm1}
The Levi-Civita connection $\nabla$ of $M$ is given by
\begin{subequations} \label{CASEISpacelikeLeviCivitaEq1ALL}
\begin{eqnarray}
\label{CASEISpacelikeLeviCivitaEq1a}\nabla _{e_{1}}e_{1}=\nabla _{e_{1}}e_{2}=0, &&
\\\label{CASEILeviCivitaEq1b}
\nabla _{e_{2}}e_{1}=\tanh \theta k_2e_{2}, &\quad &\nabla _{e_{2}}e_{2}=-\tanh \theta k_2e_{1}, 
\end{eqnarray}
\end{subequations}
and the matrix representation shape operator $S$ of $M$ with respect to $\{e_1,e_2\}$ is
\begin{equation}\label{CASEISpacelikeLeviCivitaEq1ShpOp}
S=\left(\begin{array}{cc}
e_1(\theta)&0\\
0&k_2
\end{array}\right)
\end{equation}
for a function $k_2$ satisfying
\begin{eqnarray}\label{ClassThmDiagonSpacelikekCod1Case1}
e_1(k_2)=\tanh \theta k_2(e_1(\theta)-k_2). 
\end{eqnarray}
Furthermore, $\theta$ satisfies
\begin{equation}\label{CASEISpacelikeLeviCivitaEq1Theta}
e_2(\theta)=0.
\end{equation}
\end{lem}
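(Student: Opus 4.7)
The plan is to exploit the constancy of $k$ (equivalently, $\widetilde\nabla_X k \equiv 0$) together with the Codazzi equation \eqref{MlinkCodazzi} and the CPD hypothesis. As a starting observation, since $M$ is CPD with tangential component of $k$ parallel to $e_1$, one has $Se_1 = k_1 e_1$, and then self-adjointness of $S$ on a Riemannian surface forces $Se_2 = k_2 e_2$; in particular $S$ is already diagonal in the frame $\{e_1,e_2\}$, leaving only the value $k_1$ to be identified. Moreover, by Proposition \ref{PROPOPP2Ext}, the integral curves of $e_1$ are geodesics, hence $\nabla_{e_1}e_1 = 0$, and the orthonormality of $\{e_1,e_2\}$ in the Riemannian induced metric immediately yields $\nabla_{e_1}e_2 = 0$ as well, proving \eqref{CASEISpacelikeLeviCivitaEq1a}.

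Next I would differentiate \eqref{ExpofS100} along $e_2$ using the Gauss and Weingarten formulas, keeping in mind that $N$ is time-like so $h(X,Y) = -\langle SX,Y\rangle N$. Separating tangential and normal components of $\widetilde\nabla_{e_2}k = 0$, and using $\langle Se_2,e_1\rangle = 0$ to annihilate $h(e_2,e_1)$, the normal part collapses to $\cosh\theta\, e_2(\theta) = 0$, which delivers \eqref{CASEISpacelikeLeviCivitaEq1Theta}. Substituting $e_2(\theta)=0$ back into the tangential part gives $\cosh\theta\, \nabla_{e_2}e_1 = \sinh\theta\, k_2\, e_2$, hence $\nabla_{e_2}e_1 = \tanh\theta\, k_2\, e_2$, and orthonormality then forces $\nabla_{e_2}e_2 = -\tanh\theta\, k_2\, e_1$, establishing \eqref{CASEILeviCivitaEq1b}. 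Repeating the same differentiation along $e_1$, using $\nabla_{e_1}e_1 = 0$ and $Se_1 = k_1 e_1$, the normal component of $\widetilde\nabla_{e_1}k = 0$ reduces to $k_1 = e_1(\theta)$, which completes \eqref{CASEISpacelikeLeviCivitaEq1ShpOp}.

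Finally, I would obtain \eqref{ClassThmDiagonSpacelikekCod1Case1} from the Codazzi equation applied with $(X,Y,Z) = (e_1,e_2,e_2)$. Using the connection coefficients just derived, the flatness of the normal line bundle (since $\langle N,N\rangle = -1$ is constant, $\nabla^\perp N = 0$), and $h(e_i,e_j) = -\langle Se_i,e_j\rangle N$, a direct expansion yields
$$(\widetilde\nabla_{e_1}h)(e_2,e_2) = -e_1(k_2)\, N, \qquad (\widetilde\nabla_{e_2}h)(e_1,e_2) = \tanh\theta\, k_2\bigl(k_2 - e_1(\theta)\bigr) N,$$
and equating these gives \eqref{ClassThmDiagonSpacelikekCod1Case1}.

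There is no conceptual obstacle in the argument—everything is a direct unwinding of the CPD hypothesis and the constancy of $k$. The only care needed is the bookkeeping of signs coming from the time-like character of $N$ and from the hyperbolic coefficients $\cosh\theta,\sinh\theta$ in the decomposition \eqref{ExpofS100}; once these are tracked consistently, the formulas \eqref{CASEISpacelikeLeviCivitaEq1ALL}, \eqref{CASEISpacelikeLeviCivitaEq1ShpOp}, \eqref{ClassThmDiagonSpacelikekCod1Case1} and \eqref{CASEISpacelikeLeviCivitaEq1Theta} appear in the order described above.
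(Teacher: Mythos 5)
Your proposal is correct and follows essentially the same route as the paper: differentiate the decomposition $k=\cosh\theta\,e_1+\sinh\theta\,N$ along $e_1$ and $e_2$ via the Gauss and Weingarten formulas, split into tangential and normal parts to obtain the connection coefficients, $e_2(\theta)=0$ and $k_1=e_1(\theta)$, and then apply the Codazzi equation for \eqref{ClassThmDiagonSpacelikekCod1Case1}. The only cosmetic difference is your appeal to Proposition \ref{PROPOPP2Ext} for $\nabla_{e_1}e_1=0$, which the paper instead re-derives from the tangential part of the $X=e_1$ equation; all signs and coefficients in your computation check out.
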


\begin{proof}
By considering \eqref{ExpofS100}, one can get
\begin{equation}\label{ApplyXExpofS100}
0=X(\cosh \theta)e_1+\cosh \theta \nabla_{X}e_1+%
\cosh \theta h(e_1,X)-\sinh \theta SX+X(\sinh \theta)N
\end{equation}
whenever $X$ is tangent to $M$. \eqref{ApplyXExpofS100} for $X=e_1$ gives
\begin{eqnarray}
\nonumber\nabla_{e_1}e_1&=&0,\\
\label{ApplyXExpofS10000Eq2b} e_1(\theta)&=& k_1
\end{eqnarray}
while \eqref{ApplyXExpofS100} for $X=e_2$ is giving
\begin{eqnarray}
\nonumber\nabla_{e_2}e_1&=& \tanh \theta k_2e_2,
\end{eqnarray}
where $e_2$ is the other principle direction of $M$ with $k_2$ is the  principle curvature $k_2$  corresponding to $e_2$. Thus, we have \eqref{CASEISpacelikeLeviCivitaEq1ALL} and \eqref{ClassThmDiagonSpacelikekCod1Case1} and \eqref{CASEISpacelikeLeviCivitaEq1Theta} and the second fundamental form of $M$ becomes 
$$h(e_1,e_1)=-k_1N,\quad h(e_1,e_2)=0,\quad\quad h(e_2,e_2)=-k_2N.$$
By considering the Codazzi equation, we obtain \eqref{ClassThmDiagonSpacelikekCod1Case1}.
\end{proof}

\begin{rem}
 Because of \eqref{ApplyXExpofS10000Eq2b}, if $e_1(\theta)\equiv0$ implies $k_1=0$. We will not consider this case because of Remark \ref{RemarkForCas}). 
\end{rem}

Now, we consider a point $p\in M$ at which $e_1(\theta)$ does not vanish.   First, we would like to prove the following lemma.

\begin{lem}\label{Case1ClassThmDiagonSpacelikekClm12}
There exists a local coordinate system  $(s,t)$ defined in a neighborhood $\mathcal N_p$ of $p$ such that the induced metric of $M$ is
\begin{equation}\label{ClassThmDiagonSpacelikekDefgEqRESCase1}
g=ds^2+m^2dt^2
\end{equation}
for a function  satisfying
\begin{equation}\label{Case1ClassThmDiagonSpacelikekDefm}
e_1(m)-\tanh \theta k_2m=0.
\end{equation}
Furthermore, the vector fields $e_1,e_2$ described above become $e_1=\partial_s$,  $\displaystyle e_2=\frac 1{m}\partial_t$ in $\mathcal N_p$.
\end{lem}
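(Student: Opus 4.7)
The plan is to construct the coordinates $(s,t)$ via the flow of $e_1$ starting from a transversal integral curve of $e_2$, and then use the commutation relation $[\partial_s,\partial_t]=0$ to read off the form of the metric together with the ODE for $m$.

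First I would observe that by \eqref{CASEISpacelikeLeviCivitaEq1a} the unit field $e_1$ satisfies $\nabla_{e_1}e_1=0$, so by Proposition~\ref{PROPOPP2Ext} its integral curves are geodesics of $M$ parametrized by arc length. Shrinking the neighborhood of $p$ if necessary, let $\gamma(t)$ be the integral curve of $e_2$ through $p$, and for each $t$ let $\Phi_s(\gamma(t))$ denote the flow along $e_1$ for time $s$. This gives a local diffeomorphism $(s,t)\mapsto \Phi_s(\gamma(t))$, and by construction $\partial_s=e_1$ and $\partial_t|_{s=0}=e_2|_{\gamma(t)}$.

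Next, on the coordinate neighborhood I would write $\partial_t=a\,e_1+b\,e_2$ for smooth functions $a,b$, with initial conditions $a(0,t)=0$ and $b(0,t)=1$. Computing $[\partial_s,\partial_t]=0$ and using \eqref{CASEISpacelikeLeviCivitaEq1ALL} to get $[e_1,e_2]=\nabla_{e_1}e_2-\nabla_{e_2}e_1=-\tanh\theta\,k_2\,e_2$, the commutator reduces to
\begin{equation*}
0=e_1(a)\,e_1+\bigl(e_1(b)-\tanh\theta\,k_2\,b\bigr)e_2.
\end{equation*}
Splitting into components yields $\partial_s a=0$ and $\partial_s b=\tanh\theta\,k_2\,b$. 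The first, with the vanishing initial condition, forces $a\equiv 0$; setting $m:=b$ turns the second into \eqref{Case1ClassThmDiagonSpacelikekDefm}, i.e.\ $e_1(m)-\tanh\theta\,k_2\,m=0$, and the normalization $b(0,t)=1$ shows $m$ is a bona fide positive function near $p$.

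Finally I would compute the induced metric: since $\partial_s=e_1$ and $\partial_t=m\,e_2$, with $\{e_1,e_2\}$ orthonormal and both space-like, we get
\begin{equation*}
g(\partial_s,\partial_s)=1,\qquad g(\partial_s,\partial_t)=0,\qquad g(\partial_t,\partial_t)=m^2,
\end{equation*}
which is exactly \eqref{ClassThmDiagonSpacelikekDefgEqRESCase1}, and the relation $e_2=\tfrac{1}{m}\partial_t$ is immediate. The only subtle point — and what I would flag as the main technical obstacle — is the legitimacy of the coordinate construction itself: one must verify that $(s,t)$ is a genuine local chart near $p$, which follows because $\partial_s=e_1$ and $\partial_t=e_2$ at $s=0$ are linearly independent, but this is a standard inverse function theorem argument.
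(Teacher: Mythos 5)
Your proposal is correct and is essentially the paper's argument run in the opposite order: the paper takes a non-vanishing solution $m$ of $e_1(m)=\tanh\theta\,k_2\,m$, checks $[e_1,me_2]=0$ using the same bracket relation $[e_1,e_2]=-\tanh\theta\,k_2\,e_2$, and invokes the commuting-frames theorem, whereas you build the chart first by flowing along $e_1$ from an integral curve of $e_2$ and then read off $m$ and its ODE from $[\partial_s,\partial_t]=0$. The only substantive difference is that your construction explicitly exhibits $m$ (with $m(0,t)=1$, hence positive), a point the paper leaves implicit.
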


\begin{proof}
Because of \eqref{CASEISpacelikeLeviCivitaEq1ALL} we have $[e_1,e_2]=-\tanh \theta k_2e_2$ because of \eqref{CASEISpacelikeLeviCivitaEq1ALL}. Thus, if $m$ is a non-vanishing smooth function on $M$ satisfying  \eqref{Case1ClassThmDiagonSpacelikekDefm}, then we have $\displaystyle \left[e_1,me_2\right]=0$. Therefore, there exists a local coordinate system $(s,t)$ such that $e_1=\partial_s$ and  $\displaystyle e_2=\frac 1m\partial_t$. Thus, the induced metric of  $M$ is as given in \eqref{ClassThmDiagonSpacelikekDefgEqRESCase1}
\end{proof}

Now, we are ready to obtain the classification theorem.
\begin{thm}\label{ClassThmDiagonSpacelikek}
Let $M$ be an oriented space-like surface in $\mathbb E^3_1$. Then, $M$ is  endowed with a canonical principal direction relative to a space-like constant direction if and only if it is congruent to the surface given by one of the followings
\begin{enumerate}
\item A surface given by
\begin{subequations}\label{ClassThmDiagonSpacelikekSurfaALL}
\begin{equation}
\label{ClassThmDiagonSpacelikekSurfaEq1}
x(s,t)=\int^s{\cosh \theta(\tau)d\tau}\Big(1,0,0\Big)+\int^s{\sinh \theta(\tau)d\tau} \Big(0,\sinh t,\cosh t\Big)+\gamma(t)
\end{equation}
where $\gamma$ is the  $\mathbb E^3_1$-valued function given by 
\begin{equation}\label{ClassThmDiagonSpacelikeSurfaEq2}
\gamma(t)=\left(0,\int^t{\Psi(\tau)\cosh \tau d\tau,\int^t{\Psi(\tau)\sinh \tau d\tau}}\right).
\end{equation}
for a function $ \Psi\in C^{\infty}(M)$;
\end{subequations}

\item A flat surface given by
\begin{align}\label{ClassThmDiagonSpacelikekSurfbEq1}
\begin{split}
x(s,t)=&\int^s{\cosh \theta(\tau)d\tau}\Big(1,0,0\Big)+\int^s{\sinh \theta(\tau)d\tau}\Big(0,\sinh t_0,\cosh t_0\Big)\\
&+\Big(0,t\cosh t_0,t\sinh t_0\Big).
\end{split}
\end{align}
for a constant $t_0$.
\end{enumerate}
\end{thm}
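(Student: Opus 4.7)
The plan is to construct the position vector $x(s,t)$ of $M$ explicitly by integrating a moving frame adapted to $k$, and then recognize the resulting expression as one of the two forms (1) or (2), with the dichotomy governed by whether $k_2\equiv 0$ or not.

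The first step I would take is to introduce the auxiliary unit time-like vector field
\[
\xi = -\sinh\theta\,e_1-\cosh\theta\,N,
\]
which lies in the $\{e_1,N\}$-plane and is orthogonal to $k$. Using the relation $k_1=e_1(\theta)$ coming from Lemma \ref{Case1ClassThmDiagonSpacelikekClm1} together with the Gauss--Weingarten formulas, a short computation gives $\tilde\nabla_{e_1}\xi=0$, so $\xi=\xi(t)$ depends only on the $t$-coordinate of Lemma \ref{Case1ClassThmDiagonSpacelikekClm12}. Inverting the linear relation between $(e_1,N)$ and $(k,\xi)$ yields $e_1=\cosh\theta\,k+\sinh\theta\,\xi$. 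Since $x_s=e_1$ and $\theta$ depends only on $s$ by \eqref{CASEISpacelikeLeviCivitaEq1Theta}, one integration in $s$ produces
\[
x(s,t)=\Bigl(\int^s\cosh\theta(\tau)\,d\tau\Bigr)k+\Bigl(\int^s\sinh\theta(\tau)\,d\tau\Bigr)\xi(t)+\gamma(t),
\]
with $\gamma(t)$ an $\mathbb E^3_1$-valued integration ``constant''.

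Next I would determine $\xi(t)$ and $\gamma(t)$. Since $\xi(t)$ is unit time-like and orthogonal to $k=(1,0,0)$, write $\xi(t)=(0,\sinh\psi(t),\cosh\psi(t))$. Computing $\tilde\nabla_{e_2}\xi$ via Lemma \ref{Case1ClassThmDiagonSpacelikekClm1} and \eqref{Case1ClassThmDiagonSpacelikekDefm} yields $\tilde\nabla_{e_2}\xi=\frac{k_2}{\cosh\theta}e_2$, so $\xi$ is non-constant exactly when $k_2\not\equiv 0$. In the non-flat case one has $\psi'(t)\neq 0$, so a reparametrization of $t$ normalizes $\psi(t)=t$; then $\xi'(t)=(0,\cosh t,\sinh t)$ is unit space-like and orthogonal to both $k$ and $\xi$, hence collinear with $e_2$. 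Substituting into $x_t=me_2$ forces $\gamma'(t)$ to be a scalar multiple of $(0,\cosh t,\sinh t)$, and writing that scalar as $\Psi(t)$ and integrating gives \eqref{ClassThmDiagonSpacelikeSurfaEq2}, establishing case (1).

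The flat case $k_2\equiv 0$ must be treated separately. Then $\xi$ is a constant unit time-like vector which I write as $(0,\sinh t_0,\cosh t_0)$, and \eqref{Case1ClassThmDiagonSpacelikekDefm} collapses to $e_1(m)=0$; reparametrizing $t\mapsto\int m(\tau)\,d\tau$ normalizes $m\equiv 1$, so that $e_2=(0,\cosh t_0,\sinh t_0)$ is constant and one integration of $x_t=e_2$ yields case (2). The converse is a direct verification: for a surface parametrized by (1) or (2) one computes $x_s$, $x_t$ and the unit normal, checks that the induced metric is Riemannian, and confirms that $x_s$ is a principal direction along which the tangential projection of $k$ aligns, so the CPD property holds.

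The main obstacle I anticipate is the normalization step that sets $\psi(t)=t$: it has to be verified that this reparametrization is consistent with the constraint \eqref{Case1ClassThmDiagonSpacelikekDefm} on $m$ and with the Codazzi equation \eqref{ClassThmDiagonSpacelikekCod1Case1}, so that $(\theta(s),\Psi(t))$ is genuinely free data parametrizing the full moduli of CPD surfaces. A secondary technical point is handling, at the level of open sets, the separation between the points where $k_2$ vanishes and those where it does not, so that cases (1) and (2) provide a clean local classification.
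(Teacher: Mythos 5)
Your proposal is correct, and its skeleton coincides with the paper's: the same frame and coordinates from Lemmas \ref{Case1ClassThmDiagonSpacelikekClm1} and \ref{Case1ClassThmDiagonSpacelikekClm12}, the same dichotomy (flat, i.e.\ $k_2\equiv 0$, versus not), and the same determination of $\gamma$ from $\langle x_s,x_t\rangle=0$ and $\langle x_t,x_t\rangle=m^2$. The one genuine variation is your auxiliary field $\xi=-\sinh\theta\,e_1-\cosh\theta\,N$: the identities $\widetilde{\nabla}_{e_1}\xi=0$ and $e_1=\cosh\theta\,k+\sinh\theta\,\xi$ (which I checked; they follow from $k_1=e_1(\theta)$, $h(e_1,e_1)=-k_1N$ and $Se_1=k_1e_1$) reduce the recovery of $x$ to a single first-order integration in $s$, whereas the paper substitutes $x_{ss}=-\theta'N$ into the decomposition of $k$ to get the second-order ODE \eqref{Case1Spacelikex2} and solves it under the constraint $\langle x_s,x_s\rangle=1$. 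The two computations are equivalent --- $\{k,-\xi\}$ is just the hyperbolic rotation of $\{e_1,N\}$ by the angle $\theta$, which is exactly what makes $\xi$ parallel along $e_1$ --- but yours explains transparently why $\int^s\sinh\theta(\tau)\,d\tau$ multiplies a $t$-dependent unit time-like vector, i.e.\ where the factor $\big(0,\sinh\varphi(t),\cosh\varphi(t)\big)$ of \eqref{Case1Spacelikenewx} comes from. The ``main obstacle'' you flag is not a real one: a reparametrization $t\mapsto f(t)$ only rescales $m$ by $1/f'(t)$, which preserves \eqref{Case1ClassThmDiagonSpacelikekDefm} since $e_1(1/f'(t))=0$; this is precisely the freedom the paper invokes to reduce the general solution of $m_{ss}-\theta'\coth\theta\,m_s=0$ to the two normal forms \eqref{Case1Spacelikemall}, so your normalization $\psi(t)=t$ is consistent and the data $(\theta(s),\Psi(t))$ are indeed free.
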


\begin{proof}
In order to proof the necessary condition, we assume that $M$ is endowed with a CPD relative to $k=(1,0,0)$ with the isometric immersion $x:M\rightarrow \mathbb E^3_1$. Let $\{e_1,e_2;N\}$ is the local orthonormal frame field described before Lemma \ref{Case1ClassThmDiagonSpacelikekClm1}, $k_1,k_2$  principal curvatures of $M$ and $(s,t)$ a local coordinate system given in Lemma \ref{Case1ClassThmDiagonSpacelikekClm12}.

Note that    \eqref{Case1ClassThmDiagonSpacelikekDefm}  and \eqref{ClassThmDiagonSpacelikekCod1Case1} become
\begin{eqnarray}
\label{Case1Spacelikems} m_s-m\tanh \theta k_2=0, \\
\label{Case1SpacelikeCods}(k_2)_s=(\theta' -k_2)\tanh \theta k_2,
\end{eqnarray}
respectively and  $e_2(\theta)=0$ implies $\theta=\theta(s)$. Moreover, we have 
\begin{equation}\label{AftClm1Eq1SCase1}
e_1= x_s.
\end{equation}
By combining \eqref{Case1Spacelikems} with \eqref{CASEISpacelikeLeviCivitaEq1ShpOp}, we obtain
the shape operator $S$ of $M$  as
\begin{equation}\label{CASEISpacelikeShapeOpm1}
S=\left( 
\begin{array}{cc}
\theta' & 0 \\ 
0 & \coth \theta\frac{m_s}{m}%
\end{array}%
\right)
\end{equation}
where $'$ denotes ordinary differentiation with respect to the appropriated variable.

By combining \eqref{Case1Spacelikems} and \eqref{Case1SpacelikeCods} we obtain
$$m_{ss}-\theta'\coth \theta  m_s=0
$$
whose general solution is 
$$m(s,t)=\Psi_1(t)\int^s{\sinh \theta(\tau)d\tau}+\Psi_2(t) $$
for some smooth functions $\Psi_1,\Psi_2$.
Therefore, by re-defining $t$ properly, we may assume either
\begin{subequations}\label{Case1Spacelikemall}
\begin{equation}\label{Case1Spacelikem1} 
m(s,t)=\int^s{\sinh \theta(\tau)d\tau}+\Psi(t), \Psi\in C^\infty(M), 
\end{equation}
or
\begin{equation}
\label{Case1Spacelikem2} 
m(s,t)=1.
\end{equation}
\end{subequations}

\textbf{Case 1.} $m$ satisfies \eqref{Case1Spacelikem1}. In this case, by considering the equation \eqref{CASEISpacelikeLeviCivitaEq1ALL} with $m$ given in \eqref{Case1Spacelikem1}, we get the Levi-Civita connection of $M$ satisfies 
\begin{eqnarray} \nonumber
\nabla _{\partial_s }\partial_s &=&0.
\end{eqnarray}%
 By combining this equation with \eqref{CASEISpacelikeShapeOpm1} and using Gauss formula, we obtain
\begin{eqnarray}
\label{Case1Spacelikexss} x_{ss}&=&-\theta' N.
\end{eqnarray}
On the other hand, from the decomposition \eqref{ExpofS100}, we have $\left\langle x_s,k\right\rangle=\cosh \theta$ and  $\left\langle x_t,k\right\rangle=0.$ By considering these equations, we see that  $x$ has the form of
\begin{equation}\label{Case1Spacelikex}
x(s,t)=\left(\int^s{\cosh \theta(\tau)d\tau},x_2(s,t),x_3(s,t)\right)+\gamma(t)
\end{equation}
for a $\mathbb E^3_1$-valued smooth function $\gamma=\left(0,\gamma_{2},\gamma_{3}\right)$.
 On the other hand, by combining \eqref{AftClm1Eq1SCase1} and \eqref{Case1Spacelikexss} with \eqref{ExpofS100}, we yield 
\begin{equation}\label{Case1Spacelikex2} 
(1,0,0)=\cosh \theta x_s-\frac{\sinh \theta}{\theta'}x_{ss}.
\end{equation}

By considering \eqref{Case1Spacelikex} and $\left\langle x_s,x_s\right\rangle=1$, we solve \eqref{Case1Spacelikex2} to obtain
\begin{align}\label{Case1Spacelikenewx}
\begin{split}
x(s,t)=&\int^s{\cosh \theta(\tau)d\tau}\Big(1,0,0\Big)+\int^s{\sinh \theta(\tau)d\tau} \Big(0,\sinh \varphi(t),\cosh\varphi(t)\Big)+\gamma(t)
\end{split}
\end{align}
for a smooth function $\varphi$. Note that \eqref{Case1Spacelikenewx} implies
\begin{eqnarray}
\nonumber x_s&=& \cosh\theta(s)\Big(1,0,0\Big)+\int^s \sinh \theta(\tau)d\tau\Big(0,\sinh \varphi(t),\cosh\varphi(t)\Big),\\
\label{Case1Spacelikenewxdert} x_t&=& \varphi'(t)\int^s \sinh \theta(\tau)d\tau\Big(0,\sinh \varphi(t),\cosh\varphi(t)\Big)+\Big(0,\gamma_2'(t),\gamma_3'(t)\Big)
\end{eqnarray}
and because of $\langle x_s,x_t\rangle=0$ we have $(0,\gamma_2',\gamma_3')=h(\sinh \varphi,\cosh\varphi)$ for a smooth function $h=h(t)$.
 Therefore, \eqref{Case1Spacelikenewxdert} turns into
$$ x_t= \Big(\varphi'(t)\int^s \sinh \theta(\tau)d\tau+h(t)\Big)\Big(0,\sinh \varphi(t),\cosh\varphi(t)\Big).$$
By combining this equation with $\langle x_t,x_t\rangle=m^2$ and using \eqref{Case1Spacelikem1}, we obtain $\varphi(t)=t$ and $h(t)=\Psi(t)$  which gives \eqref{ClassThmDiagonSpacelikeSurfaEq2}. In addition,    $\varphi(t)=t$ and \label{eqref} yields \eqref{ClassThmDiagonSpacelikeSurfaEq2}. Thus, we have the Case (1) of the theorem.

\textbf{Case 2.}  $m$ is given as \eqref{Case1Spacelikem2}. In this case, the induced metric of $M$ becomes $g=ds^2+dt^2$, the Levi Civita connection of $M$ satisfies 
\begin{equation}
\nabla _{\partial_s }\partial_s =0,\quad \nabla _{\partial_s }\partial_t=0, \quad%
\nabla _{\partial_t}\partial_t=0.
\end{equation}%
and \eqref{CASEISpacelikeShapeOpm1} gives
\begin{equation} \label{CASEISpacelikeShapeOpm2}
S=\left( 
\begin{array}{cc}
\theta' & 0 \\ 
0 & 0%
\end{array}%
\right).
\end{equation}
Therefore, $x$ and $N$ satisfies
\begin{eqnarray}\nonumber
\nonumber x_{ss}=-\theta' N,\quad x_{st}=0,&& \quad x_{tt}=0.\\\nonumber
N_s=-\theta'x_s,\quad N_t=0.&&
\end{eqnarray}
A straightforward computation yields that $M$ is congruent to the surface given in Case (2) of the theorem. Hence, the proof for the necessary condition is obtained.

The poof of sufficient condition follows from a direct computation.
\end{proof}

As a direct result of  Theorem \ref{ClassThmDiagonSpacelikek}, we obtain the following classification of maximal CPD surfaces.
\begin{prop}
A maximal surface in $\mathbb E^3_1$ endowed with CPD relative to a constant, space-like direction is either an open part of a plane or congruent to the surface given by
\begin{equation} \label{Case1SMaxCPDthm}
x(s,t)=\frac1{c}\left({\sin ^{-1}(c s)},{\sqrt{1-c^2 s^2} \sinh t} ,{\sqrt{1-c^2 s^2} \cosh t}\right)
\end{equation}
for a non-zero constant $c$.

In this case the angle function $\theta$ is
\begin{equation} \label{Maximalthetas}
\theta(s)=\tanh^{-1}\left(-cs\right)
\end{equation}
\end{prop}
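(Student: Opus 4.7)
The plan is to apply Theorem \ref{ClassThmDiagonSpacelikek} and impose the maximality condition $H = k_1 + k_2 = 0$ in each of its two cases. In Case (2) the shape operator \eqref{CASEISpacelikeShapeOpm1} with $m=1$ is $\mathrm{diag}(\theta', 0)$, so $H=0$ forces $\theta' \equiv 0$; substituting into \eqref{ClassThmDiagonSpacelikekSurfbEq1} leaves a parametrization that is affine in $(s,t)$, whence $M$ is an open part of a plane.

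The non-trivial case is Case (1). From \eqref{Case1Spacelikem1} we have $m_s = \sinh\theta$, so \eqref{CASEISpacelikeShapeOpm1} gives $k_2 = \coth\theta\, m_s/m = \cosh\theta/m$. The maximality condition $\theta' + \cosh\theta/m = 0$ therefore yields
$$m = -\frac{\cosh\theta}{\theta'}.$$
Since the right-hand side depends only on $s$ while \eqref{Case1Spacelikem1} reads $m = \int^s\sinh\theta(\tau)d\tau + \Psi(t)$, we conclude that $\Psi$ is a constant. Moreover, differentiating the identity above with respect to $s$ and equating with $m_s = \sinh\theta$ produces the second-order ODE
$$\theta'' = 2(\theta')^2\tanh\theta.$$

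To integrate this, I would view $u = \theta'$ as a function of $\theta$, so $\theta'' = u\, du/d\theta$; the ODE simplifies to $d(\ln u)/d\theta = 2\tanh\theta$, which integrates to $u = \theta' = A\cosh^2\theta$ for a constant $A$. A second separation of variables gives $\tanh\theta = As + B$, and after translating $s$ and writing $c = -A$ we obtain $\theta(s) = \tanh^{-1}(-cs)$, which is \eqref{Maximalthetas}.

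It then remains to substitute $\theta(s) = \tanh^{-1}(-cs)$ into \eqref{ClassThmDiagonSpacelikekSurfaEq1}. From $\cosh\theta = 1/\sqrt{1-c^2 s^2}$ and $\sinh\theta = -cs/\sqrt{1-c^2s^2}$ one computes, by elementary integration, $\int^s\cosh\theta(\tau)d\tau = \frac{1}{c}\sin^{-1}(cs)$ and $\int^s\sinh\theta(\tau)d\tau = \frac{1}{c}\sqrt{1-c^2 s^2}$. A quick consistency check ($-\cosh\theta/\theta' = \sqrt{1-c^2s^2}/c$) forces the constant $\Psi$ to vanish, hence $\gamma \equiv 0$ in \eqref{ClassThmDiagonSpacelikeSurfaEq2}, and assembling the three terms reproduces exactly \eqref{Case1SMaxCPDthm}. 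The sufficient direction is a direct check that the surface \eqref{Case1SMaxCPDthm} has vanishing mean curvature and that its $e_1$-direction is principal with the angle function \eqref{Maximalthetas}. The only mildly non-routine step is recognizing the ODE for $\theta$ as separable under the substitution $\theta' = u(\theta)$; everything else is direct computation.
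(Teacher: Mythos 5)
Your proof is correct and follows essentially the same route as the paper: invoke Theorem \ref{ClassThmDiagonSpacelikek}, impose $\mathrm{tr}\,S=0$ on the shape operator \eqref{CASEISpacelikeShapeOpm1}, conclude that $m$ depends only on $s$ so that $\Psi$ (and hence $\gamma$) vanishes, integrate the resulting ODE to get \eqref{Maximalthetas}, and substitute back to obtain \eqref{Case1SMaxCPDthm}. The only cosmetic difference is that the paper integrates the maximality condition directly to the first integral $m\cosh\theta=\mathrm{const}$, whereas you pass through the second-order equation $\theta''=2(\theta')^2\tanh\theta$; both reduce to $\theta'=-c\cosh^{2}\theta$ and give the same conclusion.
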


\begin{proof}
Let $M$ be a space-like CPD surface and assume that it is not an open part of a plane. If $M$ is maximal, then Theorem \ref{ClassThmDiagonSpacelikek}  yields that $M$ is congruent to the
surface given by  \eqref{ClassThmDiagonSpacelikekSurfaALL}. Note that the shape operator $S$ of $M$ is \eqref{CASEISpacelikeShapeOpm1} for the function $m$ satisfying \eqref{Case1Spacelikem1}.   Considering the maximality condition $\mathrm{tr}\, S=0$ and  \eqref{CASEISpacelikeLeviCivitaEq1ShpOp}, we have
$$
\theta'+\coth \theta\frac{m_s}{m}=0.
$$
Solving this equation, we get
\begin{equation} \label{Case1Spacelikethetam}
\theta(s)=\cosh^{-1}\left(\frac{1}{c m}\right)
\end{equation}
for a non-zero constant $c$. Furthermore, one can conclude from \eqref{Case1Spacelikethetam} that the function $m$ depends only on $s$. So \eqref{Case1Spacelikem1} implies $\Psi(t)=0$ which yields $m(s)=\int^s{\sinh \theta(\tau)d\tau}$ and $\gamma(t)=(0,0,0).$
Therefore, \eqref{Case1Spacelikethetam} becomes
$$\theta'=-\frac 1c \cosh^2 \theta.$$
By solving this equation, we get the expression \eqref{Maximalthetas}. By a further computation, we obtain \eqref{Case1SMaxCPDthm}. Thus, we complete the proof of theorem.
\end{proof}

In the remaining part of this section,  we will assume that $M$ is a Lorentzian surface in the Minkowski 3-space  endowed with CPD relative to $k=(1,0,0)$.

As we mentioned in the previous subsection, the shape operator $S$ of $M$ can be non-diagonalizable. In this case, we can choose a pseudo-orthonormal frame field $\{e_1,e_2\}$ of the tangent bundle such that $S$ has the matrix representation 
\begin{align} \label{SOPCASESE31}
 S=\left(\begin{array}{cc}
k_1&\mu\\
0&k_1
\end{array}\right).
\end{align}
In this case, \eqref{kdecom} becomes
\begin{equation}\label{ExpofwithnondiagonalT100}
k=e_1+N.
\end{equation}
 By a simple computation we obtain $k_1=0$.  Thus $M$ is a flat, minimal B-scroll. It is well known that it must be congruent to the surface given by 
\begin{equation}\label{ClassThmNonDiagonTimelikekSurf}
x(s,t)=\left(\frac{s^2}2+t,\frac{(2s-1)^{3/2}}3,\frac{s^2}2-s+t\right)
\end{equation}
(See for example \cite{KimTrgy2017}). Hence, we have the following result.
\begin{proposition}\label{ClassThmNonDiagonSpacelikek}
Let $M$ be an oriented Lorentzian surface in $\mathbb E^3_1$ with non-diagonalizable shape operator. If $M$ is a surface endowed with a canonical principal direction relative to a space-like constant direction, then it is congruent to the surface given by \eqref{ClassThmNonDiagonTimelikekSurf}
\end{proposition}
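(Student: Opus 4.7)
The plan is to flesh out the computational skeleton indicated in the excerpt. Because $M$ is Lorentzian with non-diagonalizable shape operator and $\dim M=2$, only Case II of \eqref{SOPCASES} applies, so I pick a pseudo-orthonormal frame $\{e_1,e_2\}$ with $\langle e_1,e_1\rangle=\langle e_2,e_2\rangle=0$, $\langle e_1,e_2\rangle=-1$ and $S$ as in \eqref{SOPCASESE31}. Since $M$ is time-like the unit normal $N$ is space-like, $\langle N,N\rangle=1$. The unique eigenline of $S$ is $\mathrm{span}\{e_1\}$, so the CPD assumption forces $k^T$ to be proportional to $e_1$; the null-frame rescaling $(e_1,e_2)\mapsto(\beta e_1,\beta^{-1}e_2)$ preserves the pseudo-orthonormal relations and keeps $S$ in Case II form, and can be used to normalize $k^T=e_1$.

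Next I justify equation \eqref{ExpofwithnondiagonalT100}. The decomposition \eqref{kdecom} reads $k=e_1+\langle k,N\rangle N$. Computing $\langle k,k\rangle=1$ and using $\langle e_1,e_1\rangle=\langle e_1,N\rangle=0$ yields $\langle k,N\rangle^2=1$, and orienting $N$ so that $\langle k,N\rangle=1$ gives $k=e_1+N$.

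The heart of the argument is extracting $k_1=0$. Differentiating $k=e_1+N$ along $e_2$ via the Gauss and Weingarten formulas,
$$0=\widetilde\nabla_{e_2}k=\nabla_{e_2}e_1+h(e_2,e_1)-S(e_2).$$
Now the identity $\langle h(X,Y),N\rangle=\langle SX,Y\rangle$ together with $\langle e_1,e_2\rangle=-1$ gives $h(e_2,e_1)=\langle Se_2,e_1\rangle N=-k_1 N$, whereas $S(e_2)=\mu e_1+k_1 e_2$ is tangent. The normal component of the displayed identity therefore forces $k_1=0$.

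Hence $H=\mathrm{tr}\,S=2k_1=0$ and $K=\det S=k_1^{2}=0$, so $M$ is a flat minimal Lorentzian surface with non-diagonalizable shape operator; such a surface is, up to congruence, a B-scroll over a null curve. Invoking the known classification \cite{KimTrgy2017} and matching the null frame above with the fixed direction $k=(1,0,0)$ (up to an ambient isometry of $\mathbb E^3_1$ fixing $k$) yields the explicit parametrization \eqref{ClassThmNonDiagonTimelikekSurf}. I expect the only delicate step to be the orientation/normalization bookkeeping that turns \eqref{kdecom} into \eqref{ExpofwithnondiagonalT100}; the rest is mechanical use of the Gauss/Weingarten apparatus together with the B-scroll classification.
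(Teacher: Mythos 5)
Your proposal is correct and follows essentially the same route as the paper: choose the pseudo-orthonormal frame putting $S$ in the form \eqref{SOPCASESE31}, reduce \eqref{kdecom} to $k=e_1+N$, extract $k_1=0$ from $\widetilde\nabla_{e_2}k=0$, and conclude via the flat minimal B-scroll classification of \cite{KimTrgy2017}. In fact you supply the normal-component computation that the paper dismisses as ``a simple computation,'' and your normalization and sign bookkeeping are sound.
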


Now, assume that $M$ is time-like and its shape operator $S$ is diagonalizable.  Let $\{e_1, e_2\}$ be a local orthonormal frame field of the tangent bundle of $M$ and $e_1$ is proportional to $U$. Since $N$ is space-like we have two cases for subject to casuality of $e_1$.

\textbf{Case A.} $e_1$ is a space-like vector. In this case, \eqref{kdecom} implies
\begin{equation}\label{ExpofT100}
k=\sin \theta e_1+\cos \theta N.
\end{equation}

\textbf{Case B.} $e_1$ is a time-like vector. In this case, \eqref{kdecom} implies
\begin{equation}\label{ExpofTb100}
k=\sinh \theta e_1+\cosh \theta N.
\end{equation}

We have the following lemma which is the analogous of Lemma \ref{Case1ClassThmDiagonSpacelikekClm1}.

\begin{lem}\label{Case2ClassThmDiagonSpacelikekClm1}
Let $M$ be a Lorentzian surface endowed with CPD relative to $k=(1,0,0)$ and $\{e_1,e_2\}$ its principle directions such that 
$\langle k,e_2 \rangle =0$. Then we have the following statements.
\begin{enumerate}
\item If $e_1$ is space-like, then the Levi-Civita connection $\nabla$ of $M$ is given by
\begin{subequations} \label{CASEIISpacelikeLeviCivitaEq1ALL}
\begin{eqnarray}
\nabla _{e_{1}}e_{1}=\nabla _{e_{1}}e_{2}=0, &&
\label{CASEIILeviCivitaEq1a} \\\label{CASEIILeviCivitaEq1b}
\nabla _{e_{2}}e_{1}=\cot \theta k_2e_{2}, &\quad &\nabla _{e_{2}}e_{2}=\cot \theta k_2e_{1}
\end{eqnarray}
\end{subequations}
for a function $k_2$ satisfying
\begin{eqnarray}\label{ClassThmDiagonSpacelikekCod1CaseII}
e_1(k_2)=\cot \theta k_2(e_1(\theta)-k_2).
\end{eqnarray}

\item If $e_1$ is time-like, then the Levi-Civita connection $\nabla$ of $M$ is given by
\begin{subequations} \label{CASEIIbSpacelikeLeviCivitaEq1ALL}
\begin{eqnarray}
\label{CASEIIbLeviCivitaEq1a} \nabla _{e_{1}}e_{1}=\nabla _{e_{1}}e_{2}=0, &&\\
\label{CASEIIbLeviCivitaEq1b} \nabla _{e_{2}}e_{1}=\coth \theta k_2e_{2}, &\quad &\nabla _{e_{2}}e_{2}=\coth \theta k_2e_{1}, 
\end{eqnarray}
\end{subequations}
and for a function $k_2$ satisfying
\begin{eqnarray}\label{ClassThmDiagonSpacelikekCod1CaseIIb}
e_1(k_2)=\coth \theta k_2(e_1(\theta)-k_2).
\end{eqnarray}

\item In both cases $\theta$ satisfies \eqref{CASEISpacelikeLeviCivitaEq1Theta} and the matrix representation shape operator $S$  is
\begin{equation}\label{CASEIIbSpacelikeShapeOp}
S=\left( 
\begin{array}{cc}
e_1(\theta) & 0 \\ 
0 & k_2%
\end{array}%
\right).
\end{equation}
\end{enumerate}
\end{lem}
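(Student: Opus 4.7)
The plan is to mirror, in each subcase, the derivation of Lemma~\ref{Case1ClassThmDiagonSpacelikekClm1}, treating the two causal possibilities for $e_{1}$ in parallel but with the appropriate signs coming from the Lorentzian induced metric. Since $k$ is parallel in $\widetilde{\nabla}$, differentiating the decompositions \eqref{ExpofT100} and \eqref{ExpofTb100} along a tangent vector $X$ and applying the Gauss and Weingarten formulas yields a vector identity analogous to \eqref{ApplyXExpofS100}, whose tangential and normal components must vanish separately.

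First I specialise $X=e_{1}$. The normal component produces, in both cases, the identity $k_{1}=e_{1}(\theta)$, while the tangential component gives $\sin\theta\,\nabla_{e_{1}}e_{1}=0$ in Case~A and $\sinh\theta\,\nabla_{e_{1}}e_{1}=0$ in Case~B; since by Remark~\ref{RemarkForCas} we may exclude the degenerate case $k_{1}\equiv 0$, this forces $\nabla_{e_{1}}e_{1}=0$. Metric compatibility $\langle\nabla_{e_{1}}e_{2},e_{1}\rangle=-\langle e_{2},\nabla_{e_{1}}e_{1}\rangle=0$ then yields $\nabla_{e_{1}}e_{2}=0$. Next I take $X=e_{2}$; the normal part immediately gives $e_{2}(\theta)=0$, and the tangential part yields $\nabla_{e_{2}}e_{1}=\cot\theta\,k_{2}e_{2}$ in Case~A and $\nabla_{e_{2}}e_{1}=\coth\theta\,k_{2}e_{2}$ in Case~B. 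The remaining entry $\nabla_{e_{2}}e_{2}$ is then recovered from the compatibility relation $\langle\nabla_{e_{2}}e_{1},e_{2}\rangle+\langle e_{1},\nabla_{e_{2}}e_{2}\rangle=0$, where the signs $\varepsilon_{i}=\langle e_{i},e_{i}\rangle$ produce exactly \eqref{CASEIILeviCivitaEq1b} and \eqref{CASEIIbLeviCivitaEq1b}. Combining $Se_{1}=e_{1}(\theta)e_{1}$ with $Se_{2}=k_{2}e_{2}$ delivers the diagonal shape operator~\eqref{CASEIIbSpacelikeShapeOp}.

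It remains to establish the PDE for $k_{2}$. Using the connection coefficients just obtained together with the Lorentzian form $h(e_{i},e_{j})=\varepsilon_{i}k_{i}\delta_{ij}N$ of the second fundamental form, I evaluate the Codazzi identity~\eqref{MlinkCodazzi} on the triple $(X,Y,Z)=(e_{1},e_{2},e_{2})$; the normal coefficient of $(\widetilde{\nabla}_{e_{1}}h)(e_{2},e_{2})=(\widetilde{\nabla}_{e_{2}}h)(e_{1},e_{2})$ reduces, after substituting $k_{1}=e_{1}(\theta)$, to~\eqref{ClassThmDiagonSpacelikekCod1CaseII} in Case~A and to~\eqref{ClassThmDiagonSpacelikekCod1CaseIIb} in Case~B. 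The only genuine care needed throughout is the sign bookkeeping dictated by the causal characters of $e_{1}$ and $N$: it is precisely this that converts the $\tanh\theta$ of Lemma~\ref{Case1ClassThmDiagonSpacelikekClm1} into $\cot\theta$ in Case~A and into $\coth\theta$ in Case~B, so the main (though not especially deep) obstacle is keeping these signs consistent across the two parallel computations rather than any new geometric input.
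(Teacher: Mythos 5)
Your proposal is correct and follows essentially the same route as the paper, which simply repeats the computation of Lemma \ref{Case1ClassThmDiagonSpacelikekClm1} with the decompositions \eqref{ExpofT100} and \eqref{ExpofTb100}: differentiate $k$ along $e_1$ and $e_2$, split into tangential and normal parts, and finish with the Codazzi equation on $(e_1,e_2,e_2)$. One tiny remark: the cancellation $\sin\theta\,\nabla_{e_1}e_1=0\Rightarrow\nabla_{e_1}e_1=0$ in Case~A is justified by $\sin\theta\neq0$ (equivalently $U=k^T\neq0$, which is forced by the very definition of $e_1$), not by Remark \ref{RemarkForCas}, which only excludes $k_1\equiv0$.
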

\begin{proof}
We use exactly same way with the proof of Lemma \ref{Case1ClassThmDiagonSpacelikekClm1}. By considering \eqref{ExpofT100} and \eqref{ExpofTb100}, we get the statement (1) and (2) of the lemma, respectively and obtain \eqref{CASEISpacelikeLeviCivitaEq1Theta} and \eqref{CASEIIbSpacelikeShapeOp} for both cases.
\end{proof}

The proof of the following lemma is similar to the proof of Lemma \ref{Case1ClassThmDiagonSpacelikekClm12}.
\begin{lem}\label{Case2ClassThmDiagonSpacelikekClm12}
Let $M$ be a Lorentzian surface endowed with CPD relative to $k=(1,0,0)$ and $\{e_1,e_2\}$ its principle directions such that 
$\langle k,e_2 \rangle =0$. Then there exists  a neighborhood $\mathcal N_p$ of $p$ on which $e_1=\partial_s$ and  $\displaystyle e_2=\frac 1{m}\partial_t$ for a smooth function $m$. Moreover, if $e_1$ is space-like then the induced metric of $\mathcal N_p$ becomes
\begin{equation}\label{ClassThmDiagonSpacelikekDefgEqRESCaseII}
g=ds^2-m^2dt^2
\end{equation}  
and $m$ satisfies
\begin{equation}\label{Case2ClassThmDiagonSpacelikekDefm}
e_1(m)-\cot \theta k_2m=0.
\end{equation}

On the other hand, if if $e_1$ is time-like then the induced metric of $\mathcal N_m$ becomes
\begin{equation}\label{ClassThmDiagonSpacelikekDefgEqRESCaseIIb}
g=-ds^2+m^2dt^2
\end{equation}  
and $m$ satisfies
\begin{equation}\label{Case2bClassThmDiagonSpacelikekDefm}
e_1(m)-\coth \theta k_2m=0.
\end{equation}
\end{lem}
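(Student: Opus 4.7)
The plan is to imitate the proof of Lemma \ref{Case1ClassThmDiagonSpacelikekClm12}, treating each causal case separately. In both cases the underlying strategy is the same: compute the Lie bracket $[e_1,e_2]$ from the Levi-Civita connection of $M$ given in Lemma \ref{Case2ClassThmDiagonSpacelikekClm1}, use this to rescale $e_2$ to a coordinate vector field, then read off the induced metric from the inner products of $e_1,e_2$.

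First, suppose $e_1$ is space-like. From \eqref{CASEIILeviCivitaEq1a} and \eqref{CASEIILeviCivitaEq1b} one computes
\begin{equation*}
[e_1,e_2]=\nabla_{e_1}e_2-\nabla_{e_2}e_1=-\cot\theta\,k_2\,e_2.
\end{equation*}
Therefore, any non-vanishing smooth function $m$ solving \eqref{Case2ClassThmDiagonSpacelikekDefm} satisfies $[e_1,me_2]=e_1(m)e_2+m[e_1,e_2]=0$. By the Frobenius theorem (or the standard commuting-flows result), we can select local coordinates $(s,t)$ on a neighborhood $\mathcal N_p$ of $p$ with $e_1=\partial_s$ and $me_2=\partial_t$, i.e. $e_2=\tfrac{1}{m}\partial_t$. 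Since $M$ is Lorentzian with $e_1$ space-like, the vector $e_2$ must be time-like, which gives $\langle\partial_s,\partial_s\rangle=1$, $\langle\partial_t,\partial_t\rangle=-m^{2}$, and $\langle\partial_s,\partial_t\rangle=0$, yielding the metric \eqref{ClassThmDiagonSpacelikekDefgEqRESCaseII}.

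The time-like case is completely analogous. Using \eqref{CASEIIbLeviCivitaEq1a}--\eqref{CASEIIbLeviCivitaEq1b} we now obtain $[e_1,e_2]=-\coth\theta\,k_2\,e_2$, and a solution $m$ of \eqref{Case2bClassThmDiagonSpacelikekDefm} again turns $me_2$ into a vector field commuting with $e_1$. The same Frobenius argument produces coordinates $(s,t)$ with $e_1=\partial_s$, $e_2=\tfrac{1}{m}\partial_t$. Here $e_1$ is time-like and $e_2$ space-like, so $\langle\partial_s,\partial_s\rangle=-1$ and $\langle\partial_t,\partial_t\rangle=m^{2}$, giving the metric \eqref{ClassThmDiagonSpacelikekDefgEqRESCaseIIb}.

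The ODE \eqref{Case2ClassThmDiagonSpacelikekDefm} (resp. \eqref{Case2bClassThmDiagonSpacelikekDefm}) is linear in $m$ along the integral curves of $e_1$, so a positive smooth solution exists locally by prescribing a positive initial value transverse to these curves; this is the only point where local restriction enters. The main (mildly delicate) step is the bookkeeping of causal characters to ensure that the signs appearing in \eqref{ClassThmDiagonSpacelikekDefgEqRESCaseII} and \eqref{ClassThmDiagonSpacelikekDefgEqRESCaseIIb} are correct and consistent with the hypothesis on $e_1$; everything else is a direct transcription of the Euclidean-signature argument used in Lemma \ref{Case1ClassThmDiagonSpacelikekClm12}.
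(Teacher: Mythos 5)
Your proposal is correct and is essentially the argument the paper intends: the paper simply says the proof is "similar to the proof of Lemma \ref{Case1ClassThmDiagonSpacelikekClm12}", and that proof is exactly your bracket computation $[e_1,e_2]=-\cot\theta\,k_2\,e_2$ (resp. $-\coth\theta\,k_2\,e_2$), the rescaling of $e_2$ by a solution $m$ of the linear ODE so that $[e_1,me_2]=0$, and the resulting coordinate frame. Your explicit tracking of the causal characters to fix the signs in \eqref{ClassThmDiagonSpacelikekDefgEqRESCaseII} and \eqref{ClassThmDiagonSpacelikekDefgEqRESCaseIIb} is a welcome addition that the paper leaves implicit.
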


\begin{thm}\label{ClassThmCase2DiagonSpacelikek}
Let $M$ be an oriented Lorentzian surface in $\mathbb E^3_1$ with diagonalizable shape operator. Then, $M$ is  endowed with a canonical principal direction relative to a space-like, constant direction if and only if it is congruent to the surface given by one of the followings
\begin{enumerate}
\item A surface given by
\begin{subequations}\label{ClassThmDiagonSpacelikekSurfcALL}
\begin{equation}\label{ClassThmDiagonSpacelikekSurfcEq1}
x(s,t)=\int^s{\sin \theta(\tau)d\tau}\Big(1,0,0\Big)+\int^s{\cos \theta(\tau)d\tau}\Big(0,\cosh t ,\sinh t\Big)+\gamma(t),
\end{equation}
where $\gamma$ is   
\begin{equation}\label{ClassThmDiagonSpacelikekSurfcEq2}
\gamma(t)=(0,\int^t{\Psi(\tau)\sinh(\tau)d\tau},\int^t{\Psi(\tau)\cosh(\tau)d\tau})
\end{equation}
for a function $ \Psi\in C^{\infty}(M)$;
\end{subequations}

\item A surface given by
\begin{align}\label{ClassThmDiagonSpacelikekSurfdEq1}
\begin{split}
x(s,t)=&\int^s{\sin \theta(\tau)d\tau}\Big(1,0,0\Big)+\int^s{\cos \theta(\tau)d\tau}\Big(0,\cosh t_0 ,\sinh t_0 \Big)\\
&+\Big(0,t\sinh(t_0),t\cosh(t_0)\Big)
\end{split}
\end{align}
for  a constant $t_0$;

\begin{subequations}\label{ClassThmDiagonSpacelikekSurfeALL}
\item  A surface given by
\begin{equation} \label{ClassThmDiagonSpacelikekSurfeEq1}
x(s,t)=\int^s{\sinh \theta(\tau)d\tau}\Big(-1,0,0\Big)+\int^s{\cosh \theta(\tau)d\tau}\Big(0,\sinh t,\cosh t \Big)+\gamma(t),
\end{equation}
where $\gamma$ is   
\begin{equation} \label{ClassThmDiagonSpacelikekSurfeEq2}
\gamma(t)=(0,\int^t{\Psi(\tau)\cosh(\tau)d\tau},\int^t{\Psi(\tau)\sinh(\tau)d\tau})
\end{equation}
for a function $ \Psi\in C^{\infty}(M)$;
\end{subequations}

\item A surface given by
\begin{align}\label{ClassThmDiagonSpacelikekSurffEq1}
\begin{split}
x(s,t)=&\int^s{\sinh \theta(\tau)d\tau}\Big(-1,0,0\Big)+\int^s{\cosh \theta(\tau)d\tau}\Big(0,\sinh t_0,\cosh t_0\Big)\\&
+\Big(0,t\cosh t_0,t\sinh t_0\Big),
\end{split}
\end{align}
for a constant $t_0$.
\end{enumerate}
\end{thm}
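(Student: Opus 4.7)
The plan is to mimic the proof of Theorem \ref{ClassThmDiagonSpacelikek} but with two parallel branches corresponding to the causal character of $e_1$ permitted by Lemma \ref{Case2ClassThmDiagonSpacelikekClm1}, and within each branch two sub-cases matching the four items in the statement. In Case A (where $e_1$ is space-like, so $k=\sin\theta\,e_1+\cos\theta\,N$), I would combine \eqref{Case2ClassThmDiagonSpacelikekDefm} with the Codazzi relation \eqref{ClassThmDiagonSpacelikekCod1CaseII} to eliminate $k_2=\tan\theta\,(m_s/m)$; a short computation produces $m_{ss}+\theta'\tan\theta\,m_s=0$, which integrates to $m_s=\Psi_1(t)\cos\theta$ and hence $m(s,t)=\Psi_1(t)\int^s\cos\theta(\tau)\,d\tau+\Psi_2(t)$. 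After reparametrising $t$, I may assume either $m=\int^s\cos\theta\,d\tau+\Psi(t)$ (sub-case A1) or $m\equiv 1$ (sub-case A2). In Case B the analogous manoeuvre with $\cot$ replaced by $\coth$ gives $m_{ss}-\theta'\tanh\theta\,m_s=0$ and the two normal forms $m=\int^s\cosh\theta\,d\tau+\Psi(t)$ (sub-case B1) or $m\equiv 1$ (sub-case B2).

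Next, for each sub-case I reconstruct the immersion $x$ in the spirit of the proof of Theorem \ref{ClassThmDiagonSpacelikek}. Writing $e_1=\partial_s$ and using $\nabla_{\partial_s}\partial_s=0$ together with \eqref{CASEIIbSpacelikeShapeOp} and the sign of $\langle e_1,e_1\rangle$, the Gauss formula yields $x_{ss}=\theta'N$ in Case A and $x_{ss}=-\theta'N$ in Case B. Inserting this into the decomposition of $k$ gives a linear relation expressing $(1,0,0)$ in terms of $x_s$ and $x_{ss}$. Splitting componentwise along $k$ and along the Lorentzian plane $k^\perp=\{0\}\times\mathbb{E}^2_1$ (with metric $dy^2-dz^2$), and using $\langle x_s,k\rangle=\sin\theta$ in Case A (respectively $-\sinh\theta$ in Case B), I deduce that
\begin{equation*}
x(s,t) = \int^s\sin\theta\,d\tau\,(1,0,0) + \int^s\cos\theta\,d\tau\,(0,A_2(t),A_3(t)) + \gamma(t)
\end{equation*}
in Case A, and analogously in Case B with $\sin,\cos$ replaced by $\sinh,\cosh$ and $(1,0,0)$ replaced by $(-1,0,0)$.

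The final step pins down the directrix $(A_2,A_3)$ and the translation $\gamma$. The constraint $\langle x_s,x_s\rangle=1$ in Case A forces $A_2^2-A_3^2=1$, so $(A_2,A_3)=(\cosh\varphi(t),\sinh\varphi(t))$; the constraint $\langle x_s,x_s\rangle=-1$ in Case B gives instead $(A_2,A_3)=(\sinh\varphi(t),\cosh\varphi(t))$. The orthogonality $\langle x_s,x_t\rangle=0$ then forces $\gamma'(t)$ to be parallel to the derivative of the directrix, giving $\gamma$ as an integral of a single unknown $h(t)$. Finally, $\langle x_t,x_t\rangle=\pm m^2$ combined with the explicit formula for $m$ from Step~1 pins down $\varphi(t)=t$ (up to translation) and identifies $h$ with $\Psi$, delivering \eqref{ClassThmDiagonSpacelikekSurfcALL} in sub-case A1 and \eqref{ClassThmDiagonSpacelikekSurfeALL} in sub-case B1. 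In sub-cases A2 and B2, where $m\equiv 1$, the same rigidity argument forces $\varphi$ to be a constant $t_0$ and $\gamma(t)$ to be linear in $t$ along the direction orthogonal to the directrix, producing \eqref{ClassThmDiagonSpacelikekSurfdEq1} and \eqref{ClassThmDiagonSpacelikekSurffEq1}. The main obstacle I anticipate is the bookkeeping: keeping the hyperbolic-versus-trigonometric substitutions and the signs of the inner products consistent across the four sub-cases, since $k^\perp$ is itself a Lorentzian plane and the directrix can live on either its space-like or time-like unit-vector locus. The converse direction reduces to a direct differentiation of the four stated parametrisations and verification of \eqref{kdecom} together with the CPD condition.
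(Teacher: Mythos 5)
Your proposal is correct and follows essentially the same route as the paper: invoke Lemmas \ref{Case2ClassThmDiagonSpacelikekClm1} and \ref{Case2ClassThmDiagonSpacelikekClm12}, split on the causal character of $e_1$, derive the second-order ODE for $m$ to get the two normal forms $m=\int^s\cos\theta\,d\tau+\Psi(t)$ (resp. $\int^s\cosh\theta\,d\tau+\Psi(t)$) or $m\equiv 1$, reconstruct $x$ from $x_{ss}=\pm\theta'N$ and the decomposition of $k$, and pin down the directrix and $\gamma$ via the metric constraints (the only cosmetic difference is that in the $m\equiv1$ sub-cases the paper integrates the full PDE system for $x$ and $N$ directly, while you reuse the directrix rigidity argument). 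Incidentally, your sign $m_{ss}-\theta'\tanh\theta\,m_s=0$ in Case B is the one consistent with the stated normal form; the paper's displayed $+$ there is a typo.
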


\begin{proof}
In order to prove the necessary condition, we assume that $M$ is endowed with CPD relative to $k=(1,0,0)$. Let $x:M\rightarrow \mathbb E^3_1$ be an isometric immersion, $\{e_1,e_2;N\}$  the local orthonormal frame field described before Lemma \ref{Case2ClassThmDiagonSpacelikekClm1}, $k_1,k_2$  principal curvatures of $M$ and $(s,t)$ a local coordinate system given in Lemma \ref{Case2ClassThmDiagonSpacelikekClm12}. We will consider two cases described above seperately.

\textbf{Case A.} $e_1$ is a space-like vector. In this case, we have \eqref{CASEIISpacelikeLeviCivitaEq1ALL}-\eqref{ClassThmDiagonSpacelikekCod1CaseII}, \eqref{ClassThmDiagonSpacelikekDefgEqRESCaseII} and \eqref{Case2ClassThmDiagonSpacelikekDefm}. Note that \eqref{Case2ClassThmDiagonSpacelikekDefm} and \eqref{ClassThmDiagonSpacelikekCod1CaseII} turns into 
\begin{subequations}\label{Case2ClassThmDiagonSpacelikekDefm2aa} 
\begin{eqnarray}
\label{Case2ClassThmDiagonSpacelikekDefm2} m_s-\cot \theta k_2m=0,\\
\label{ClassThmDiagonSpacelikekCod1CaseII2} (k_2)_s=\cot \theta k_2(\theta'-k_2),
\end{eqnarray}
\end{subequations}
respectively.

By considering \eqref{Case2ClassThmDiagonSpacelikekDefm2} we obtain $k_2=\tan \theta\frac{m_s}{m}.$ Thus, \eqref{CASEIIbSpacelikeShapeOp} becomes
\begin{equation}\label{CASEIISpacelikeShapeOpm1}
S=\left( 
\begin{array}{cc}
\theta' & 0 \\ 
0 &\tan \theta\frac{m_s}{m}
\end{array}%
\right).
\end{equation}
Furthermore, by differentiating \eqref{Case2ClassThmDiagonSpacelikekDefm2} with respect to $s$ and using \eqref{Case2ClassThmDiagonSpacelikekDefm2aa}, we obtain
$$m_{ss}+\theta'\tan\theta m_s=0.$$
Therefore,  $m$ satisfies either 
\begin{subequations}\label{Case2Spacelikemall}
\begin{equation}\label{Case2Spacelikem1} 
m(s,t)=\int^s{\cos \theta(\xi)d\xi}+\Psi(t) 
\end{equation}
for a smooth function $\Psi$ or
\begin{equation}
\label{Case2Spacelikem2} 
m(s,t)=1
\end{equation}
\end{subequations}

\textbf{Case A1.} $m$ satisfies \eqref{Case2Spacelikem1}. In this case, similar to the Case (1) in the proof of Theorem \ref{ClassThmDiagonSpacelikek}, we consider \eqref{CASEIISpacelikeLeviCivitaEq1ALL} and \eqref{CASEIISpacelikeShapeOpm1} to get
\begin{subequations}
\begin{eqnarray}
\label{Case2Spacelikexss} x_{ss}&=&\theta' N,\\
\label{Case2spacelikexst} x_{st}&=&\frac{m_s}{m}x_t, \\
\label{Case2Spacelikextt} x_{tt}&=& mm_s x_ s+\frac{m_t}{m}x_ t-mm_s\tan \theta N.
\end{eqnarray}
\end{subequations}
Furthermore, considering \eqref{ExpofT100} we have $\left\langle e_1,k\right\rangle=\left\langle x_s,k\right\rangle=\sin \theta$ and  $\left\langle x_t,k\right\rangle=0$. So we get
\begin{equation}\label{Case2Spacelikex}
x(s,t)=\left(\int^s{\sin \theta(\tau)d\tau},x_2(s,t),x_3(s,t)\right)+\gamma(t)
\end{equation}
for a $\mathbb E^3_1$-valued smooth function $\gamma=\left(0,\gamma_{2},\gamma_{3}\right)$.
Also \eqref{ExpofT100} and \eqref{Case2Spacelikexss} imply
\begin{equation}\label{Case2Spacelikex2} 
(1,0,0)=\sin \theta x_s+\frac{\cos \theta}{\theta'}x_{ss}.
\end{equation}

By considering \eqref{Case2Spacelikex} and $\langle x_s,x_s\rangle=1$, we solve \eqref{Case2Spacelikex2} and obtain
\begin{align}\label{Case2Spacelikenewx}
x(s,t)=&\int^s{\sin \theta(\tau)d\tau}\Big(1,0,0\Big)+\int^s{\cos \theta(\tau)d\tau}\Big(0,\cosh \varphi(t) ,\sinh \varphi(t)\Big)+\gamma(t),
\end{align}
for a  smooth function $\varphi$. By a similar way  in the Case (1) in the proof of Theorem \ref{ClassThmDiagonSpacelikek}, we could get 
$\varphi(t)=t$ and \eqref{ClassThmDiagonSpacelikekSurfcEq2} by considering \eqref{ClassThmDiagonSpacelikekDefgEqRESCaseII} and \eqref{Case2Spacelikenewx}. Furthermore, considering $\varphi(t)=t$ and \eqref{ClassThmDiagonSpacelikekSurfcEq2} in \eqref{Case2Spacelikenewx} we get 
\eqref{ClassThmDiagonSpacelikekSurfcEq1}. Hence, we get the classification of surface in the case (1) of the Theorem \ref{ClassThmCase2DiagonSpacelikek}.

\textbf{Case A2.} $m$ satisfies \eqref{Case2Spacelikem2}.  In this case,\eqref{ClassThmDiagonSpacelikekDefgEqRESCaseII} turns into $g=ds^2-dt^2$, and \eqref{CASEISpacelikeShapeOpm1} gives \eqref{CASEISpacelikeShapeOpm2}. Therefore, $x$ and $N$ satisfies
\begin{eqnarray}\nonumber
\nonumber x_{ss}=\theta' N,\quad x_{st}=0,&& \quad x_{tt}=0.\\\nonumber
N_s=-\theta'x_s,\quad N_t=0.&&
\end{eqnarray}
A straightforward computation yields that $M$ is congruent to the surface given in Case (2) of the Theorem \ref{ClassThmCase2DiagonSpacelikek}. Hence, the proof for the necessary condition is obtained.

Now, we would like to get the case (3) and the case (4) of the Theorem \ref{ClassThmCase2DiagonSpacelikek}.

\textbf{Case B.} $e_1$ is a time-like vector. In this case, we have \eqref{CASEIIbSpacelikeLeviCivitaEq1ALL}-\eqref{ClassThmDiagonSpacelikekCod1CaseIIb}, \eqref{ClassThmDiagonSpacelikekDefgEqRESCaseIIb} and \eqref{Case2bClassThmDiagonSpacelikekDefm}. By a similar way to Case A we obtain 
\begin{equation}\label{CASEIIbSpacelikeShapeOpm1}
S=\left( 
\begin{array}{cc}
\theta' & 0 \\ 
0 &\tanh \theta\frac{m_s}{m}
\end{array}%
\right).
\end{equation}
Similar to the Case A, we obtain
$$m_{ss}+\theta'\tanh\theta m_s=0.$$
which yields that $m$ satisfies either 
\begin{equation}\label{Case2bSpacelikem1} 
m(s,t)=\int^s{\cosh \theta(\xi)d\xi}+\Psi(t) 
\end{equation}
for a smooth function $\Psi$ or \eqref{Case2Spacelikem2}.

If $m$ satisfies \eqref{Case2bSpacelikem1}, we use exactly the same  way that we did in the Case A1 and obtain the Case (3) of the theorem. On the other hand, if $m(s,t)=1$, then we get the Case (4) of the theorem. Hence, the proof of the necessary condition is completed.

The proof of sufficient condition follows from a direct computation.
\end{proof}

\begin{prop}
A minimal surface in $\mathbb E^3_1$ endowed with CPD relative to a constant, space-like direction is either an open part of a plane or congruent to one of following two surface given below
\begin{enumerate}
\item A surface given by
\begin{equation} \label{Case1SMinCPDthm}
x(s,t)=\frac 1{c}\left(\sinh ^{-1}(c s),\sqrt{c^2 s^2+1} \cosh t, \sqrt{c^2 s^2+1} \sinh t\right)
\end{equation}
for a non-zero constant $c$. In this case, the angle function $\theta$ is
\begin{equation} \label{MinimalthetasCase1}
\theta(s)=\cot^{-1} (cs).
\end{equation}

\item A surface given by
\begin{equation} \label{Case2SMinCPDthm}
x(s,t)= \frac 1{c}\left(-\ln \left(\sqrt{c^2 s^2-1}+c s\right),\sqrt{c^2 s^2-1} \sinh t ,\sqrt{c^2 s^2-1} \cosh t \right)
\end{equation}
for a non-zero constant $c$. In this case, the angle function $\theta$ is
\begin{equation} \label{MinimalthetasCase2}
\theta(s)=\coth^{-1} (cs).
\end{equation}
\end{enumerate}

\end{prop}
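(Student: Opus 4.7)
The proof proposal is to specialize the classification Theorem~\ref{ClassThmCase2DiagonSpacelikek} by imposing the minimality condition $H=\mathrm{tr}\,S=0$ on each of the four families and showing that the two flat families collapse to planes while the two non-flat families reduce to the stated surfaces after solving a single ODE for the angle function $\theta$.

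\textbf{Flat branches (Cases (2) and (4) of Theorem~\ref{ClassThmCase2DiagonSpacelikek}).} In both of these branches one has $m\equiv 1$, so \eqref{CASEIIbSpacelikeShapeOp} degenerates to $S=\mathrm{diag}(\theta',0)$. Hence $H=0$ forces $\theta'=0$, i.e.\ $\theta$ is constant. Substituting a constant value into either of the parametrizations \eqref{ClassThmDiagonSpacelikekSurfdEq1} or \eqref{ClassThmDiagonSpacelikekSurffEq1} yields a map that is affine in $(s,t)$, whose image is (an open part of) an affine plane. This disposes of both flat cases at once.

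\textbf{Non-flat, space-like $e_1$ branch (Case (1)).} Here the shape operator is given by \eqref{CASEIISpacelikeShapeOpm1} and $m$ is given by \eqref{Case2Spacelikem1}, so $m_s=\cos\theta$. Minimality reads $\theta'+\tan\theta\,m_s/m=0$, i.e.\ $m\,\theta'+\sin\theta=0$. Since $\theta$ depends only on $s$, this forces $m$ to depend only on $s$, hence $\Psi(t)$ is constant; translating $s$ absorbs this constant and gives $m(s)=\int^{s}\cos\theta(\tau)\,d\tau$. Differentiating $m\theta'=-\sin\theta$ and substituting $m_s=\cos\theta$ yields the autonomous ODE
\begin{equation*}
\sin\theta\,\theta''=2\cos\theta\,(\theta')^{2}.
\end{equation*}
Set $u(\theta)=\theta'$ so that $\theta''=u\,du/d\theta$; separating variables gives $du/u=2\cot\theta\,d\theta$, which integrates to $\theta'=-c\sin^{2}\theta$ for a non-zero constant $c$. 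A second separation $-d\theta/\sin^{2}\theta=c\,ds$ integrates (after absorbing the integration constant into a translation of $s$) to $\cot\theta=cs$, i.e.\ \eqref{MinimalthetasCase1}. Using $\sin\theta=1/\sqrt{1+c^{2}s^{2}}$ and $\cos\theta=cs/\sqrt{1+c^{2}s^{2}}$, a direct evaluation of $\int^{s}\sin\theta\,d\tau=c^{-1}\sinh^{-1}(cs)$ and $\int^{s}\cos\theta\,d\tau=c^{-1}\sqrt{1+c^{2}s^{2}}$ in \eqref{ClassThmDiagonSpacelikekSurfcEq1} with $\gamma\equiv 0$ produces \eqref{Case1SMinCPDthm}.

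\textbf{Non-flat, time-like $e_1$ branch (Case (3)) and the main obstacle.} The argument is completely parallel, using \eqref{CASEIIbSpacelikeShapeOpm1}, $m_s=\cosh\theta$, and minimality $m\theta'+\sinh\theta=0$, which leads to $\sinh\theta\,\theta''=2\cosh\theta\,(\theta')^{2}$ and then $\theta'=-c\sinh^{2}\theta$, giving $\coth\theta=cs$ as in \eqref{MinimalthetasCase2}. Plugging the resulting identities $\sinh\theta=1/\sqrt{c^{2}s^{2}-1}$ and $\cosh\theta=cs/\sqrt{c^{2}s^{2}-1}$ into \eqref{ClassThmDiagonSpacelikekSurfeEq1} with $\gamma\equiv 0$ yields \eqref{Case2SMinCPDthm}. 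The only non-routine step in the whole argument is the passage to the autonomous ODE for $\theta$ and its integration by the substitution $u=\theta'$; once $\cot\theta=cs$ (resp.\ $\coth\theta=cs$) is obtained, the rest is a direct antiderivative computation. The sufficiency direction is an immediate verification that the explicit surfaces \eqref{Case1SMinCPDthm} and \eqref{Case2SMinCPDthm} are CPD with the claimed $\theta$ and satisfy $H=0$.
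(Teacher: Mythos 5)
Your proof is correct and follows essentially the same route as the paper: specialize Theorem~\ref{ClassThmCase2DiagonSpacelikek}, impose $\mathrm{tr}\,S=0$, deduce $\Psi\equiv 0$ and $m=m(s)$, and integrate the resulting ODE for $\theta$ (the paper reaches $\theta'=-c\sin^{2}\theta$, resp.\ $\theta'=-c\sinh^{2}\theta$, via the first integral $m\sin\theta=1/c$ from \eqref{MinCPDEq1} rather than your second-order reduction, but the outcome and the remaining antiderivative computations are identical). Your explicit dismissal of the flat branches (2) and (4), where $S=\mathrm{diag}(\theta',0)$ forces $\theta$ constant and the parametrization becomes affine, is a detail the paper leaves implicit.
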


\begin{proof}

Let $M$ be a Lorentzian CPD surface and assume that it is not an open part of a plane. If $M$ is minimal, then Theorem \ref{ClassThmCase2DiagonSpacelikek}  yields that $M$ is congruent to the
one of surfaces given by \eqref{ClassThmDiagonSpacelikekSurfcALL} and \eqref{ClassThmDiagonSpacelikekSurfeALL}.

\textbf{Case 1.} $M$ is congruent to the surface given by  \eqref{ClassThmDiagonSpacelikekSurfcALL}. Note that the shape operator $S$ of $M$ is \eqref{CASEIISpacelikeShapeOpm1} for the function $m$ satisfying \eqref{Case2Spacelikem1}. Then the minimality condition $\mathrm{tr} S=0$ and \eqref{CASEIISpacelikeShapeOpm1} give
$$\theta' +\tan \theta\frac{m_s}{m}=0$$
which implies
\begin{equation}\label{MinCPDEq1}
\theta(s)=\sin^{-1}\left(\frac {1}{cm}\right)
\end{equation}
for a non-zero constant $c$ and $m=m(s)$. Therefore, \eqref{Case2Spacelikem1} give $\Psi=0$. So,  
$$m(s,t)=\int^s{\cos \theta(\xi)d\xi}.$$
By combining this equation with \eqref{MinCPDEq1} we obtain \eqref{MinimalthetasCase1}. By a further computation, we obtain \eqref{Case1SMinCPDthm}.  

\textbf{Case 2.} $M$ is congruent to the surface given by  \eqref{ClassThmDiagonSpacelikekSurfeALL}. Note that the shape operator $S$ of $M$ is \eqref{CASEIIbSpacelikeShapeOpm1} for the function $m$ satisfying \eqref{Case2bSpacelikem1}. In this case, the minimality condition $\mathrm{tr} S=0$ and \eqref{CASEIISpacelikeShapeOpm1} give
$$\theta' +\tanh \theta\frac{m_s}{m}=0.$$
By a similar way to Case 1, we obtain \eqref{Case2SMinCPDthm} and \eqref{MinimalthetasCase2}.
\end{proof}

\subsection{CPD surfaces relative to a light-like, constant direction.}

In this subsection we will consider surfaces endowed with CPD relative to the fixed vector $k=(1,0,1)$ which is light-like.
\begin{thm}\label{ClassThmDiagonLightlikek}
Let $M$ be an oriented surface in $\mathbb E^3_1$ with diagonalizable shape operator. Then, $M$ is endowed with a canonical principal direction relative to a light-like, constant direction if and only if it is congruent to the surface given by 
\begin{align}\label{ClassThmDiagonLightlikekSurf}
\begin{split}
x(s,t)=& \left(\int_{s_0}^s\frac{1}{2\phi(\xi)^2}d\xi\right)\Big(1,0,1\Big)+s\left(\gamma_0(t),\sqrt{-2\varepsilon \gamma_0(t)+1},\gamma_0(t)-\varepsilon\right)\\
&+\int_{t_0}^tb(\xi)\left(\sqrt{-2\varepsilon \gamma_0(\xi)+1},-\varepsilon,\sqrt{-2\varepsilon \gamma_0(\xi)+1}\right)d\xi
\end{split}
\end{align}
for some smooth functions $b,\gamma_0$, some constants $s_0,t_0$ and $\varepsilon\in\{ -1,1\}$ and a non-vanishing function $\phi$ whose derivative does not vanish. In this case, the tangential vector field $(1,0,1)^T=\phi(s) e_1$ is a principle direction of $M$  for a vector field $\langle e_1,e_1\rangle=\varepsilon.$  
\end{thm}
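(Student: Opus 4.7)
The plan is to adapt the argument of Theorems \ref{ClassThmDiagonSpacelikek} and \ref{ClassThmCase2DiagonSpacelikek} to a light-like fixed direction $k=(1,0,1)$. Since the shape operator is diagonalizable, I choose orthonormal principal directions $\{e_1,e_2\}$ with $\langle e_1,e_1\rangle=\varepsilon\in\{-1,1\}$ so that $k^T=\phi e_1$ for some function $\phi$. The relations $\langle k,k\rangle=0$ and $\langle e_2,k\rangle=0$ then force $N$ to have causality opposite to $e_1$, i.e.\ $\langle N,N\rangle=-\varepsilon$, and $k=\phi(e_1\pm N)$; taking the $+$ sign without loss of generality gives $k=\phi(e_1+N)$, hence $N=\phi^{-1}k-e_1$.

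Differentiating $k=\phi(e_1+N)$ along $X=e_1$ and $X=e_2$ and separating tangential from normal components, combined with the geodesic property $\nabla_{e_1}e_1=0$ from Proposition \ref{PROPOPP2Ext}, yields the first principal curvature $k_1=e_1(\phi)/\phi$ (non-vanishing by Remark \ref{RemarkForCas}, so $\phi'\neq 0$), the constraint $e_2(\phi)=0$, and explicit formulas for $\nabla_{e_2}e_1$ and $\nabla_{e_2}e_2$ in terms of $\phi$, $\varepsilon$, and the second principal curvature $k_2$. The Codazzi equation produces a further ODE linking $k_2$ and $\phi$ along integral curves of $e_1$. I then introduce local coordinates $(s,t)$ adapted to $e_1$ with $e_2$ parallel to $\partial_t$ up to a scaling $m(s,t)$, analogously to Lemmas \ref{Case1ClassThmDiagonSpacelikekClm12} and \ref{Case2ClassThmDiagonSpacelikekClm12}.

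To derive the explicit parametrization I substitute $N=\phi^{-1}k-e_1$ into the Gauss formula for $x_{ss}$, which becomes a first-order linear ODE in $x_s$ whose general solution takes the form $x_s=f(s)k+V(t)$ for a scalar $f$ determined by $\phi$ and a vector-valued $V(t)$. The constraints $\langle x_s,x_s\rangle=\varepsilon/\phi^2$ and $\langle x_s,k\rangle=\text{const.}$ then force $V(t)$ to lie on the null cone while satisfying $\langle V(t),k\rangle=\varepsilon$, and elementary algebra on these two conditions explicitly yields $V(t)=(\gamma_0(t),\sqrt{-2\varepsilon\gamma_0(t)+1},\gamma_0(t)-\varepsilon)$. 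A further integration of $x_t$ using $\langle x_s,x_t\rangle=0$ and the metric coefficient $m$ produces the third summand of \eqref{ClassThmDiagonLightlikekSurf}, with the integrand $b(t)(\sqrt{-2\varepsilon\gamma_0+1},-\varepsilon,\sqrt{-2\varepsilon\gamma_0+1})$ arising as a (reparametrized) antiderivative of a space-like unit vector complementary to $\{k,V(t)\}$.

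I expect the main obstacle to be this identification of the null curve $V(t)$: in the space-like-$k$ versions of Theorems \ref{ClassThmDiagonSpacelikek} and \ref{ClassThmCase2DiagonSpacelikek} the corresponding step reduced to a circular or hyperbolic rotation inside a two-dimensional subspace, but here the null character of $k$ places $V(t)$ on a one-dimensional affine slice of the light-cone, and the awkward square root $\sqrt{-2\varepsilon\gamma_0+1}$ is precisely what simultaneously enforces $\langle V,V\rangle=0$ and $\langle V,k\rangle=\varepsilon$. The converse follows from direct differentiation of \eqref{ClassThmDiagonLightlikekSurf}, computing the induced metric, recovering $N$ from the light-cone structure, and verifying that the tangential component of $k$ is an eigenvector of the shape operator with eigenvalue $\phi'/\phi$.
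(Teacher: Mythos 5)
Your proposal is correct and follows essentially the same route as the paper: the decomposition $k=\phi(e_1\pm N)$ with $\langle e_1,e_1\rangle=\varepsilon=-\langle N,N\rangle$, the resulting connection/Codazzi relations, the key ODE $x_{ss}\parallel k$ giving $x_s=\frac{1}{2\phi^2}k+V(t)$, and the null-cone identification of $V(t)$ from $\langle V,V\rangle=0$, $\langle V,k\rangle=\varepsilon$ are exactly the paper's steps. The only (harmless) variation is in the last integration: you obtain $\Gamma'(t)=b(t)W(t)$ by orthogonality to $\mathrm{span}\{k,V(t)\}$, whereas the paper first proves the metric coefficient is $a(t)s+b(t)$ and matches $x_t$ against $e_2$.
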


\begin{proof}
Let $N$ be the unit normal vector field of $M$ associated with its orientation and $x:M\rightarrow \mathbb E^3_1$ an isometric immersion. We put $\varepsilon=-\langle N,N\rangle$. Assume that $e_1$ is the unit tangent normal vector field proportional to tangential part of $k=(1,0,1)$ and $e_2$ is a unit space-like tangent vector field with $\langle e_1,e_2\rangle=0$. Then, we have
\begin{equation}\label{Expof101}
(1,0,1)=\phi(e_1-N)
\end{equation}
for a smooth function $\phi.$ Note that we have $\langle e_1,e_1\rangle=\varepsilon$.

Now, in order to proof the necessary condition, we assume that $e_1$ is a principle direction of $M$ with corresponding principle curvature $k_1$. By a simple computation considering \eqref{Expof101} we obtain
\begin{equation}\label{ApplyX2Expof101}
0=X(\phi)(e_1-N)+\phi \nabla_{X}e_1+\phi h(e_1,X)+\phi SX
\end{equation}
whenever $X$ is tangent to $M$. Note that \eqref{ApplyX2Expof101} for $X=e_1$ gives
\begin{subequations}\label{ApplyX2Expof101Eq2ALL}
\begin{eqnarray}
\label{ApplyX2Expof101Eq2a}\nabla_{e_1}e_1&=&0,\\
\label{ApplyX2Expof101Eq2b}e_1(\phi)&=&-\phi k_1
\end{eqnarray}
while \eqref{ApplyX2Expof101} for $X=e_2$ is giving
\begin{eqnarray}
\label{ApplyX2Expof101Eq2c}\nabla_{e_2}e_1&=&-k_2e_2,\\
\label{ApplyX2Expof101Eq2d}e_2(\phi)&=&0,
\end{eqnarray}
\end{subequations}
where $e_2$ is the other principle direction of $M$ with corresponding principle curvature $k_2$ and $\langle e_2,e_2\rangle=1$. In addition, the second fundamental form of $M$ becomes
\begin{eqnarray}
\label{ClassThmDiagonLightlikeSecFundForm} h(e_1,e_1)=-k_1N,\quad h(e_1,e_2)=0,\quad\quad h(e_2,e_2)=-\varepsilon k_2N.
\end{eqnarray}
Therefore, the Codazzi equation gives
\begin{equation}\label{ClassThmDiagonLightlikekCod1}
e_1(k_2)=k_2^2-k_1k_2\quad\mbox{ and }\quad e_2(k_1)=0.
\end{equation}
Note that, because of Remark \ref{RemarkForCas}, \eqref{ApplyX2Expof101Eq2b} implies that $e_1(\phi)$ does not vanish on $M$. 

Let $p\in M$. First, we would like to prove the following claim.

\begin{claim}\label{ClassThmDiagonLightlikekClm1}
There exists a neighborhood $\mathcal N_p$ of $p$ on which the induced metric of $M$ becomes
\begin{equation}\label{ClassThmDiagonLightlikekDefgEqRES}
g=\frac\varepsilon{\phi(s)^2}ds^2+(a(t)s+b(t))^2dt^2
\end{equation}  
for some smooth functions $a,b$ such that $e_1=\phi\partial_s$,  $\displaystyle e_2=\frac 1{a(t)s+b(t)}\partial_t$ and
\begin{equation}\label{ClassThmDiagonLightlikekDefphi}
k_1(s)=-\phi'(s)
\end{equation}
\end{claim}

\begin{proofofclaim}
Note that we have $[e_1,e_2]=k_2e_2$ because of \eqref{ApplyX2Expof101Eq2a} and \eqref{ApplyX2Expof101Eq2c}. Therefore, \eqref{ApplyX2Expof101Eq2d} implies $\displaystyle \left[\frac 1\phi e_1,Ge_2\right]=0$ for any function $G$ satisfying
\begin{equation}\label{ClassThmDiagonLightlikekDefG}
e_1(G)=-k_2G.
\end{equation}
Therefore, there exists a local coordinate system $(s,t)$ such that $e_1=\phi\partial_s$ and  $\displaystyle e_2=\frac 1G\partial_t$. Thus, the induced metric of  $M$ is
\begin{equation}\label{ClassThmDiagonLightlikekDefgEq1}
g=\frac\varepsilon{\phi^2}ds^2+G^2dt^2.
\end{equation}
Note that we have $k_1=k_1(s)$ and \eqref{ClassThmDiagonLightlikekDefphi}
because of  \eqref{ApplyX2Expof101Eq2b}, \eqref{ApplyX2Expof101Eq2d} and \eqref{ClassThmDiagonLightlikekCod1}. In addition, the first equation in \eqref{ClassThmDiagonLightlikekCod1} and \eqref{ClassThmDiagonLightlikekDefG} give
\begin{equation}\label{ClassThmDiagonLightlikekCod1Eq1a}
\phi(k_2)_s=k_2(k_2-k_1)
\end{equation}
and
\begin{equation}\label{ClassThmDiagonLightlikekDefGv2}
\phi(s) G_s=-k_2G
\end{equation}
respectively.
Now, getting derivative of \eqref{ClassThmDiagonLightlikekDefG} implies
\begin{equation}\label{ClassThmDiagonLightlikekDefGss}
\phi'G_s+\phi G_{ss}=-(k_2)_sG-k_2G_s.
\end{equation}
By combining \eqref{ClassThmDiagonLightlikekDefGv2},  \eqref{ClassThmDiagonLightlikekDefphi} and \eqref{ClassThmDiagonLightlikekCod1Eq1a} with \eqref{ClassThmDiagonLightlikekDefGss}, we obtain $\phi G_{ss}=0$ which yields $G=a(t)s+b(t)$ for some smooth functions $a,b$. Therefore, \eqref{ClassThmDiagonLightlikekDefgEq1} becomes \eqref{ClassThmDiagonLightlikekDefgEqRES}.
\end{proofofclaim}

Now, let $s,t$ be local coordinates described in the Claim \ref{ClassThmDiagonLightlikekClm1}. Note that we have
\begin{equation}\label{AftClm1Eq1}
e_1=\phi x_s.
\end{equation}
Moreover, \eqref{ApplyX2Expof101Eq2a} and \eqref{ClassThmDiagonLightlikeSecFundForm} imply
$$\widetilde\nabla_{\phi\partial_s}(\phi\partial_s)=-k_1N$$
from which we get
\begin{equation}\label{AftClm1Eq2}
N=\frac{1}{\phi'}\left(\phi\phi'x_s+\phi^2x_{ss}\right).
\end{equation}
By combining \eqref{AftClm1Eq1} and \eqref{AftClm1Eq2} with \eqref{Expof101} we get 
\begin{equation}\nonumber
(1,0,1)=\phi\left(\phi x_s-\frac{1}{\phi'}\left(\phi\phi'x_s+\phi^2x_{ss}\right)\right)
\end{equation}
which yields
$$x_{ss}=-\frac{\phi'}{\phi^3}(1,0,1).$$
By integrating this equation and considering \eqref{AftClm1Eq1}, we get
\begin{equation}\label{AftClm1Eq3}
x_s=\frac{1}{2\phi^2}(1,0,1)+\gamma(t)
\end{equation}
for an $\mathbb E^3_1$-valued smooth function $\gamma$. As $\langle x_s,x_s\rangle=\frac{\varepsilon}{\phi^2}$, we have
$$\frac{\langle(1,0,1),\gamma(t)\rangle-\varepsilon}{\phi(s)^2}+\langle\gamma(t),\gamma(t)\rangle=0.$$
Since $\phi$ is not constant, the above equation implies $\langle(1,0,1),\gamma(t)\rangle=\varepsilon$ and $\langle\gamma(t),\gamma(t)\rangle=0.$ By considering these equations, we obtain 
$$\gamma(t)=\left(\gamma_0(t),\sqrt{-2\varepsilon \gamma_0(t)+1},\gamma_0(t)-\varepsilon\right)$$ 
for a smooth function $\gamma_0$. Therefore \eqref{AftClm1Eq3} becomes
\begin{equation}\label{AftClm1Eq4}
x_s=\frac{1}{2\phi^2}(1,0,1)+\left(\gamma_0(t),\sqrt{-2\varepsilon \gamma_0(t)+1},\gamma_0(t)-\varepsilon\right).
\end{equation}
 \eqref{AftClm1Eq1} and \eqref{AftClm1Eq2} imply
\begin{subequations}\label{AftClm1Eq5ALL}
\begin{eqnarray}
\label{AftClm1Eq5a} e_1=\phi x_s&=&\frac{1}{2\phi(s)}(1,0,1)+\phi(s)\left(\gamma_0(t),\sqrt{-2\varepsilon \gamma_0(t)+1},\gamma_0(t)-\varepsilon\right),\\
\label{AftClm1Eq5b} N&=&-\frac{1}{2\phi(s)}(1,0,1)+\phi(s)\left(\gamma_0(t),\sqrt{-2\varepsilon \gamma_0(t)+1},\gamma_0(t)-\varepsilon\right)
\end{eqnarray}
and we have 
\begin{eqnarray}
\label{AftClm1Eq5c} e_2=\frac 1{a(t)s+b(t)} x_t&=&\left(\sqrt{-2\varepsilon \gamma_0(t)+1},-\varepsilon,\sqrt{-2\varepsilon \gamma_0(t)+1}\right).
\end{eqnarray}
\end{subequations}
By integrating \eqref{AftClm1Eq4}, we obtain
\begin{equation}\label{IntofAftClm1Eq4}
x(s,t)=\frac{1}{2\phi^2}(1,0,1)+s\left(\gamma_0(t),\sqrt{-2\varepsilon \gamma_0(t)+1},\gamma_0(t)-\varepsilon\right)+\Gamma(t)
\end{equation}
for a smooth $\mathbb E^3_1$-valued function $\Gamma$. By combining \eqref{IntofAftClm1Eq4} and \eqref{AftClm1Eq5c}, we get
$$\Gamma'(t)=\left(\left(a(t)-\frac{\gamma_0'(t)}{\sqrt{-2\varepsilon \gamma_0(t)+1}}\right)s+b(t)\right)\left(\sqrt{-2\varepsilon \gamma_0(t)+1},-\varepsilon,\sqrt{-2\varepsilon \gamma_0(t)+1}\right)$$
from which we conclude
$$a(t)=\frac{\gamma_0'(t)}{\sqrt{-2\varepsilon \gamma_0(t)+1}}$$
and
$$\Gamma'(t)=b(t)\left(\sqrt{-2\varepsilon \gamma_0(t)+1},-\varepsilon,\sqrt{-2\varepsilon \gamma_0(t)+1}\right).$$
By combining the last equation with \eqref{IntofAftClm1Eq4}, we obtain \eqref{ClassThmDiagonLightlikekSurf}. Hence, the proof of the necessary condition is completed.

Conversely, consider the surface $M$ given by \eqref{ClassThmDiagonLightlikekSurf} whose derivative does not vanish. A direct computation yields that unit normal of $M$ is
$$N=\left(\frac{\varepsilon   }{2 \phi (s)}-\varepsilon    \phi (s) \gamma _0(t),-\varepsilon    \phi (s) \sqrt{1-2 \varepsilon    \gamma _0(t)},\varepsilon    (-\phi (s)) \gamma _0(t)+\frac{\varepsilon   }{2 \phi (s)}+\phi (s)\right)$$
and the principle curvatures of $M$ are 
\begin{equation}\label{ClassThmDiagonLightlikekPrncplDrct}
e_1=\phi(s)\frac\partial{\partial s }\quad\mbox{and}\quad e_2=\frac {\sqrt{-2\varepsilon \gamma_0(t)+1}}{{s\gamma_0'(t)}+b(t)\sqrt{-2\varepsilon \gamma_0(t)+1}}\frac\partial{\partial t}.
\end{equation}
Moreover, we have $\langle e_2,(1,0,1)\rangle=0$ which yields that $(1,0,1)^T$ is a principle direction. Hence the proof of sufficient condition is completed.
\end{proof}
By considering the surface proof of Theorem \ref{ClassThmDiagonLightlikek}, we obtain the following proposition.
\begin{prop}\label{PropClassThmDiagonLightlikekShapeOp}
Let $M$ be the surface given by \eqref{ClassThmDiagonLightlikekSurf}. Then, the matrix representation of the shape operator $S$ of $M$ with respect to $\{e_1,e_2\}$ is
\begin{equation}\label{ClassThmDiagonLightlikekShapeOp}\displaystyle
S=\left(
\begin{array}{cc}
 \varepsilon    \phi '(s) & 0 \\
 0 & \frac{\varepsilon    \phi (s) \gamma _0'(t)}{\sqrt{1-2 \varepsilon    \gamma _0(t)} b(t)+s \gamma _0'(t)} \\
\end{array}
\right),
\end{equation}
where $e_1,\ e_2$ are vector fields given by \eqref{ClassThmDiagonLightlikekPrncplDrct}
\end{prop}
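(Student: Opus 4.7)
The plan is to directly verify \eqref{ClassThmDiagonLightlikekShapeOp} by computing $\widetilde{\nabla}_{e_j} N$ in the ambient coordinates of $\mathbb{E}^3_1$ and invoking the Weingarten formula $SX=-\widetilde{\nabla}_X N$. All the ingredients are already on the table from the proof of Theorem \ref{ClassThmDiagonLightlikek}: the explicit expression for the unit normal
$$N=\left(\tfrac{\varepsilon}{2\phi(s)}-\varepsilon\phi(s)\gamma_0(t),\ -\varepsilon\phi(s)\sqrt{1-2\varepsilon\gamma_0(t)},\ -\varepsilon\phi(s)\gamma_0(t)+\tfrac{\varepsilon}{2\phi(s)}+\phi(s)\right),$$
the tangential formula $e_1=\phi(s)\partial_s$ from \eqref{AftClm1Eq5a}, and the coefficient of $\partial_t$ in $e_2$ read off from \eqref{ClassThmDiagonLightlikekPrncplDrct}, namely $\sqrt{1-2\varepsilon\gamma_0(t)}/\bigl(s\gamma_0'(t)+b(t)\sqrt{1-2\varepsilon\gamma_0(t)}\bigr)$.

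For the $(1,1)$ entry, I would differentiate $N$ in $s$, factor out the common $\phi'(s)$, and compare componentwise with $e_1$ as written in \eqref{AftClm1Eq5a}. The outcome should be the clean identity $\partial_s N=-\varepsilon\phi'(s)\phi(s)^{-1}e_1$, so that $\widetilde{\nabla}_{e_1}N=\phi(s)\partial_s N=-\varepsilon\phi'(s)\,e_1$. The Weingarten formula then gives $Se_1=\varepsilon\phi'(s)\,e_1$, matching the top-left of \eqref{ClassThmDiagonLightlikekShapeOp}. (One can also cross-check against $k_1=-\phi'(s)$ from \eqref{ClassThmDiagonLightlikekDefphi} together with the normalization $\langle e_1,e_1\rangle=\varepsilon$.)

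For the $(2,2)$ entry, I would differentiate $N$ in $t$, using the identity $\partial_t\sqrt{1-2\varepsilon\gamma_0(t)}=-\varepsilon\gamma_0'(t)/\sqrt{1-2\varepsilon\gamma_0(t)}$. After this substitution, $\partial_t N$ collapses into a scalar multiple of the direction vector $(\sqrt{1-2\varepsilon\gamma_0(t)},-\varepsilon,\sqrt{1-2\varepsilon\gamma_0(t)})$ appearing in \eqref{AftClm1Eq5c}, i.e.\ a multiple of $e_2$. Multiplying by the coefficient of $\partial_t$ inside $e_2$ and negating then yields exactly $\varepsilon\phi(s)\gamma_0'(t)/\bigl(\sqrt{1-2\varepsilon\gamma_0(t)}\,b(t)+s\gamma_0'(t)\bigr)$. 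The off-diagonal entries vanish automatically because each $\widetilde{\nabla}_{e_j}N$ turns out to be proportional to $e_j$ alone; this is consistent with the fact, already established in Theorem \ref{ClassThmDiagonLightlikek}, that $\{e_1,e_2\}$ is a frame of principal directions.

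The proposition is thus really a book-keeping corollary of the classification: no conceptual obstacle arises, and the only care needed is in tracking the $\varepsilon$-dependent signs and in applying the chain rule to the square root when computing $\partial_t N$.
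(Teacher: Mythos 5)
Your computation is correct and is essentially what the paper does: the paper gives no separate proof of this proposition, deriving it ``by considering the proof of Theorem \ref{ClassThmDiagonLightlikek},'' i.e.\ exactly the direct Weingarten computation $SX=-\widetilde{\nabla}_X N$ with the explicit $N$, $e_1=\phi(s)\partial_s$ and $e_2$ from \eqref{ClassThmDiagonLightlikekPrncplDrct} that you outline, and your sign-tracking (including the factor $-\varepsilon$ by which the normal quoted in the sufficiency part differs from \eqref{AftClm1Eq5b}) checks out. No gap.
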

From Proposition \ref{PropClassThmDiagonLightlikekShapeOp} we conclude the following characterization results.
\begin{cor}
A flat surface  with diagonalizable shape operator in $\mathbb E^3_1$ endowed with CPD relative to a light-like direction is congruent to the surface given by
\begin{align}
\begin{split}
x(s,t)=&\left(c s+\sqrt{1-2 c \varepsilon   } t+\int_{ {s_0}}^s \frac{1}{2 \phi (\xi)^2}  d\xi,-\varepsilon   t  +s \sqrt{1-2 c \varepsilon   },\right.\\&\left.
s (c-\varepsilon  )+\sqrt{1-2 c \varepsilon   } t+\int_{ {s_0}}^s \frac{1}{2 \phi (\xi)^2}   d\xi\right)
\end{split}
\end{align}
for some constants $s_0, \varepsilon\in\{ -1,1\}$ and a non-vanishing function $\phi$ whose derivative does not vanish.
\end{cor}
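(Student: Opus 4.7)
The plan is to exploit the explicit diagonal form of the shape operator provided in Proposition \ref{PropClassThmDiagonLightlikekShapeOp}, which applies to the surfaces classified by Theorem \ref{ClassThmDiagonLightlikek}. Flatness means the Gaussian curvature $K=\det S$ vanishes identically. Reading off the diagonal entries of $S$, the determinant is proportional to $\phi'(s)\gamma_0'(t)$ (up to the nowhere-vanishing denominator appearing in $S_{22}$), so flatness forces $\phi'(s)\gamma_0'(t)\equiv 0$.

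Next, I would invoke the hypothesis of Theorem \ref{ClassThmDiagonLightlikek} that $\phi$ is non-vanishing with $\phi'$ also non-vanishing (this is precisely the CPD hypothesis excluding the degenerate case by Remark \ref{RemarkForCas}, since $e_1(\phi)=-\phi k_1$). Hence the only way for $K$ to vanish identically is $\gamma_0'(t)\equiv 0$, so $\gamma_0(t)\equiv c$ is a constant.

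Substituting $\gamma_0\equiv c$ into the parametrization \eqref{ClassThmDiagonLightlikekSurf} collapses the second integral to
\[
\int_{t_0}^t b(\xi)\bigl(\sqrt{1-2\varepsilon c},\,-\varepsilon,\,\sqrt{1-2\varepsilon c}\bigr)\,d\xi = B(t)\bigl(\sqrt{1-2\varepsilon c},\,-\varepsilon,\,\sqrt{1-2\varepsilon c}\bigr),
\]
where $B(t)=\int_{t_0}^t b(\xi)\,d\xi$. Reparametrizing the $t$-coordinate by $\tilde t:=B(t)$ (allowed since $b$ is nowhere zero, as it is essentially $\|x_t\|$ in the original frame), I would rewrite the $t$-dependent part as $(\sqrt{1-2\varepsilon c}\,\tilde t,\,-\varepsilon\,\tilde t,\,\sqrt{1-2\varepsilon c}\,\tilde t)$ and the $s$-dependent part becomes $s(c,\sqrt{1-2\varepsilon c},c-\varepsilon)+\bigl(\int_{s_0}^s\frac{d\xi}{2\phi(\xi)^2}\bigr)(1,0,1)$. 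Collecting the three coordinates component-wise (and renaming $\tilde t$ to $t$) produces exactly the stated parametrization.

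No real obstacle is expected here: the whole argument is essentially a one-line consequence of the shape-operator computation, followed by an elementary reparametrization. The only point requiring a moment of care is verifying that $b(t)\neq 0$ so that $\tilde t=B(t)$ is a legitimate change of coordinate; this follows because $\{e_1,e_2\}$ in Theorem \ref{ClassThmDiagonLightlikek} is a frame of the tangent bundle and $e_2=\bigl(a(t)s+b(t)\bigr)^{-1}\partial_t$, so $a(t)s+b(t)$ cannot vanish on the domain, and in the flat case $a(t)=\gamma_0'(t)/\sqrt{1-2\varepsilon\gamma_0(t)}\equiv 0$ forces $b(t)$ itself to be non-vanishing.
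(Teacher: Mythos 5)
Your proposal is correct and matches the paper's (implicit) argument: the corollary is derived exactly by imposing $\det S=0$ on the shape operator of Proposition \ref{PropClassThmDiagonLightlikekShapeOp}, using that $\phi$ and $\phi'$ are non-vanishing to force $\gamma_0'\equiv 0$, and then substituting $\gamma_0\equiv c$ into \eqref{ClassThmDiagonLightlikekSurf} with the reparametrization $t\mapsto\int_{t_0}^t b(\xi)\,d\xi$. Your extra remark that $b$ is non-vanishing in the flat case (since $a\equiv 0$ and $e_2=(a(t)s+b(t))^{-1}\partial_t$ must be defined) is a point the paper leaves unstated.
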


\begin{cor}
A minimal (resp. maximal) surface with diagonalizable shape operator in $\mathbb E^3_1$ endowed with CPD relative to a light-like direction is congruent to the surface given by
\begin{align}
\begin{split}
x(s,t)=&\left(\frac{(c_1+s)^3}{c_2}+c_1 t,(s-c_1) \sqrt{1-2 t \varepsilon   },\frac{(c_1+s)^3}{c_2}+c_1 t+s (t-\varepsilon   )\right)
\end{split}
\end{align}
for some constants $c_2>0,c_1$ with $\varepsilon= -1$ (resp. $\varepsilon= 1$).
\end{cor}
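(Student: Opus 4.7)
The plan is to impose the minimality (resp.\ maximality) condition $\mathrm{tr}\,S=0$ directly on the explicit shape operator \eqref{ClassThmDiagonLightlikekShapeOp} given by Proposition~\ref{PropClassThmDiagonLightlikekShapeOp}. Since $\varepsilon=-\langle N,N\rangle$, the case $\varepsilon=-1$ corresponds to a time-like surface (so $H\equiv 0$ means minimal) and the case $\varepsilon=1$ to a space-like surface (so $H\equiv 0$ means maximal); the two statements of the corollary are therefore handled uniformly, with the dichotomy coming solely from the sign of $\varepsilon$.

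First I would clear the denominator in $\mathrm{tr}\,S=0$ and rearrange, obtaining the PDE
$$\phi'(s)\sqrt{1-2\varepsilon\gamma_0(t)}\,b(t)+\gamma_0'(t)\bigl(s\phi(s)\bigr)'=0,$$
valid on the coordinate neighbourhood supplied by Theorem~\ref{ClassThmDiagonLightlikek}. Recall that $\phi'(s)\neq 0$ by hypothesis, and $\gamma_0'(t)\not\equiv 0$ as well, for otherwise the $(2,2)$-entry of $S$ vanishes, forcing $\varepsilon\phi'\equiv 0$, a contradiction. A separation-of-variables argument then forces the ratio $(s\phi(s))'/\phi'(s)$ to be a constant, which I denote $-c_1$. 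Solving the resulting ODE $(s\phi(s))'=-c_1\phi'(s)$ gives $\phi(s)=\alpha/(s+c_1)$ for some nonzero constant $\alpha$, and substituting back yields $b(t)\sqrt{1-2\varepsilon\gamma_0(t)}=c_1\gamma_0'(t)$.

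Next, the coordinate $t$ may be reparametrized so that $\gamma_0(t)=t$, after which $b(t)=c_1/\sqrt{1-2\varepsilon t}$. All three integrals appearing in~\eqref{ClassThmDiagonLightlikekSurf} become elementary: the $s$-integral evaluates to $(s+c_1)^3/(6\alpha^2)$ up to an additive constant, while the two $t$-integrals give a linear term in $t$ and a multiple of $\sqrt{1-2\varepsilon t}$ respectively. Setting $c_2:=6\alpha^2>0$ and absorbing the remaining integration constants by a translation of $\mathbb E^3_1$ (which is a Lorentzian isometry, hence preserves congruence), I would recover the claimed parametrization. The converse direction is immediate: for this specific $\phi(s)$ and $b(t)$, the trace of the shape operator~\eqref{ClassThmDiagonLightlikekShapeOp} vanishes identically.

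The principal difficulty lies in the separation step: one must justify globally (not merely on dense subsets) that $(s\phi(s))'/\phi'(s)$ is constant, which requires careful use of the nowhere-vanishing hypotheses on $\phi'$ and the nonconstancy of $\gamma_0$, as well as handling the degenerate branch $(s\phi(s))'\equiv 0$ (which corresponds to $\phi(s)=\alpha/s$, subsumed by the $c_1=0$ case). Once $\phi$ is pinned down, the remaining work is routine integration and an isometric normalization.
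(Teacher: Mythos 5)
Your approach is exactly the one the paper intends: it offers no written proof of this corollary beyond citing Proposition~\ref{PropClassThmDiagonLightlikekShapeOp}, and imposing $\mathrm{tr}\,S=0$ on \eqref{ClassThmDiagonLightlikekShapeOp}, separating variables to obtain $\phi(s)=\alpha/(s+c_1)$ and $b(t)\sqrt{1-2\varepsilon\gamma_0(t)}=c_1\gamma_0'(t)$, normalizing $\gamma_0(t)=t$, and integrating is precisely the intended computation, with your treatment of the degenerate branches ($\gamma_0'\equiv 0$ forcing $\phi'\equiv 0$, and the $c_1=0$ case) being sound. One caveat: carrying your integration through actually yields first coordinate $\frac{(s+c_1)^3}{c_2}+(s+c_1)t$ and second coordinate $(s+c_1)\sqrt{1-2\varepsilon t}$ rather than the printed formula --- and indeed the printed formula cannot arise from \eqref{ClassThmDiagonLightlikekSurf}, since that form forces $x_3-x_1=-\varepsilon s$ whereas the corollary's expression has $x_3-x_1=s(t-\varepsilon)$ --- so the mismatch is a typographical slip in the paper (the $st$ term dropped from the first coordinate, and a sign on $c_1$ in the second), not a gap in your argument.
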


\section*{Acknowledgments}
This paper is a part of PhD thesis of the first named author who is supported by The Scientific and Technological Research Council of Turkey (TUBITAK) as a PhD scholar.

\vskip 5mm

\end{document}